\documentclass{article}
\usepackage{amssymb}

\usepackage{arxiv}
\usepackage{array}
\usepackage{url}
\usepackage{hyperref}
\usepackage{amsmath,amsthm,amssymb,amsbsy}
\usepackage{aliascnt}
\usepackage{mathtools}
\usepackage{paralist}
\usepackage[dvipsnames]{xcolor}
\usepackage{color}
\usepackage{graphicx}
\usepackage{comment}
\usepackage{wrapfig}
\usepackage{caption}
\usepackage{subcaption}
\usepackage{bm}
\usepackage{framed}
\usepackage{fancyhdr}
\usepackage{enumitem}
\usepackage[T1]{fontenc}
\usepackage[utf8]{inputenc}

\usepackage[ruled, linesnumbered]{algorithm2e}
\SetKwInput{kwInit}{Initialization}

\usepackage[capitalize,nameinlink]{cleveref}[0.19]
\usepackage{booktabs} 
\usepackage{wrapfig}
\usepackage{hyperref}
\usepackage{microtype}     
\usepackage{tcolorbox}
\usepackage{tikz}
\usetikzlibrary {arrows.meta,graphs,shapes}

\definecolor{mydarkblue}{rgb}{0,0.08,0.45} 
\hypersetup{
  colorlinks=true,
  frenchlinks=false,
  pdfborder={0 0 0},
  naturalnames=false,
  hypertexnames=false,
  breaklinks,
  linkcolor=mydarkblue,
  citecolor=mydarkblue,
  filecolor=mydarkblue,
  urlcolor = black,
}

\numberwithin{equation}{section} 


\newtheorem{theorem}{Theorem}[section] 
\newaliascnt{lemma}{theorem}
\newtheorem{lemma}[lemma]{Lemma} 
\aliascntresetthe{lemma}
\newaliascnt{corollary}{theorem}
\newtheorem{corollary}[corollary]{Corollary}
\aliascntresetthe{corollary}
\newaliascnt{proposition}{theorem}
\newtheorem{proposition}[proposition]{Proposition}
\aliascntresetthe{proposition}
\newaliascnt{definition}{theorem}

\aliascntresetthe{definition}
\newaliascnt{problem}{theorem}

\aliascntresetthe{problem}
\newaliascnt{observation}{theorem}

\aliascntresetthe{observation}
\newaliascnt{assumption}{theorem}
\newtheorem{assumption}[assumption]{Assumption}
\aliascntresetthe{assumption}
\newaliascnt{fact}{theorem}

\aliascntresetthe{fact}

\theoremstyle{definition}

\crefname{theorem}{theorem}{theorems}
\Crefname{theorem}{Theorem}{Theorems}
\crefname{lemma}{lemma}{lemmas}
\Crefname{lemma}{Lemma}{Lemmas}
\crefname{corollary}{corollary}{corollaries}
\Crefname{corollary}{Corollary}{Corollaries}
\crefname{proposition}{proposition}{propositions}
\Crefname{proposition}{Proposition}{Propositions}
\crefname{definition}{definition}{definitions}
\Crefname{definition}{Definition}{Definitions}
\crefname{problem}{problem}{problems}
\Crefname{problem}{Problem}{Problems}
\crefname{observation}{observation}{observations}
\Crefname{observation}{Observation}{Observations}
\crefname{assumption}{assumption}{assumptions}
\Crefname{assumption}{Assumption}{Assumptions}
\crefname{fact}{fact}{facts}
\Crefname{fact}{Fact}{Facts}
\crefname{remark}{remark}{remarks}
\Crefname{remark}{Remark}{Remarks}

\newcommand{\munderbar}[1]{\underline{#1}}

\newcommand{\RR}{{\sf RR }}
\newcommand{\RRsc}{{\sf RR}\text{-}{\sf sc}}

\newcommand{\sL}{{\sf L}}
\newcommand{\sG}{{\sf G}}
\newcommand{\sA}{{\sf A}}
\newcommand{\sB}{{\sf B}}
\newcommand{\sC}{{\sf C}}

\newcommand{\sF}{{\sf F}}
\newcommand{\scc}{{\sf sc}}


\newcommand{\e}{\begin{equation}}
\newcommand{\ee}{\end{equation}}
\newcommand{\en}{\begin{equation*}}
\newcommand{\een}{\end{equation*}}
\newcommand{\eqn}{\begin{eqnarray}}
\newcommand{\eeqn}{\end{eqnarray}}
\newcommand{\bmat}{\begin{bmatrix}}
\newcommand{\emat}{\end{bmatrix}}
\newcommand{\btab}{\begin{tabular}}
\newcommand{\etab}{\end{tabular}}














%









\newcommand{\op}{{\operatorname{op}}}

%

%

%

%

%

\graphicspath{{./fig/}}


\title{High Probability Guarantees for Random Reshuffling}

\author{
    Hengxu Yu \\
    School of Data Science \\
    The Chinese University of Hong Kong, Shenzhen \\
    \texttt{hengxuyu@link.cuhk.edu.cn}
    \And
    Xiao Li \thanks{Xiao Li was supported in part by the National Natural Science Foundation of China (NSFC) under grants 12571330 and 12201534, and in part by the 1+1+1 CUHK-CUHK(SZ)-GDSTC Joint Collaboration Fund under grant 2025A0505000049.} \\
    School of Data Science \\
    The Chinese University of Hong Kong, Shenzhen \\
    \texttt{lixiao@cuhk.edu.cn}
}

\begin{document}

\maketitle

\begin{abstract}
We consider the stochastic gradient method with random reshuffling ($\mathsf{RR}$) for tackling smooth nonconvex optimization problems. $\mathsf{RR}$ finds broad applications in practice, notably in training neural networks. In this work, we provide high probability complexity guarantees for this method. First, we establish a \emph{high probability} ergodic sample complexity result (without taking expectation) for finding an $\varepsilon$-stationary point. Our derived complexity matches the best existing in-expectation one up to a logarithmic term while imposing no additional assumptions nor modifying $\mathsf{RR}$'s updating rule. Second, building on this analysis, we propose a simple \emph{stopping criterion} embedded with a computable stopping test for $\mathsf{RR}$ (denoted as $\mathsf{RR}$-$\mathsf{sc}$). This criterion is guaranteed to be triggered after a finite number of iterations, enabling us to prove the same order high probability complexity for the returned last iterate. The fundamental ingredient in deriving the aforementioned results is a new \emph{concentration property for random reshuffling}, which could be of independent interest. Finally, we conduct numerical experiments on small neural network training to support our theoretical findings.
\end{abstract}

\keywords{random reshuffling, high probability analysis}

\section{Introduction}\label{sec:intro}
In this work, we study the following smooth nonconvex finite-sum optimization problem:
\begin{equation}\label{eq:problem}
  \min _{x \in \mathbb{R}^d } f(x) = \frac{1}{n} \sum_{i=1}^{n} f_i(x),
\end{equation}
where each component function $f_i$ is continuously differentiable, though not necessarily convex. This form of optimization problem is ubiquitously found in various engineering fields, including machine learning and signal processing \cite{bottou2018optimization,chi2019nonconvex}. 

\textbf{Assumption.} Throughout this paper, we make the following standard smoothness assumption on the component functions.

\begin{assumption}\label{Assumption:L smooth}
For all $i\in [n]$, $f_i$ in \eqref{eq:problem} is bounded from below by $\bar f_i$ and its gradient $\nabla f_i $ is Lipschitz continuous with parameter $\sL> 0$.
\end{assumption}

Let $\bar f$ be a lower bound (e.g., optimal value) of $f$ in \eqref{eq:problem}.  It was established in \cite[Proposition 3]{khaled2022better} that the following variance-type bound is true once \Cref{Assumption:L smooth} holds:
\begin{equation}\label{eq:ABC condition}
    \frac{1}{n} {\sum}_{i=1}^n \|\nabla f_i(x) - \nabla f(x)\|^2 \leq \sA(f(x) - \bar f) + \sB,
\end{equation}
where $\sA = 2\sL > 0$ and $\sB = \frac{\sA}{n} \sum_{i=1}^n (\bar f - \bar f_i) \geq0$.
This variance-type inequality can be traced back to \cite{poljak1973pseudogradient} for providing a unified analysis of various stochastic optimization method.

Let us remark that the lower boundedness condition on each component function $f_i$ in \Cref{Assumption:L smooth} is only used for deriving an explicit form of the constant $\sB$ in \eqref{eq:ABC condition}. The property \eqref{eq:ABC condition} may still hold for some constant $\sB$ even without this condition. In this case, the lower boundedness assumption on each $f_i$ is no longer required.

\textbf{The random reshuffling method.} Many contemporary real-world applications of form \eqref{eq:problem} are large-scale. A notable example of such a scenario is the training of deep neural networks. In the large-scale case, a popular method for addressing problem \eqref{eq:problem} is the stochastic gradient method (SGD) \cite{robbins1951stochastic,ghadimi2013stochastic}, which adopts a uniformly random sampling of the component functions with replacement. Despite SGD being studied extensively in theory over the past decades, the variant commonly implemented in practice for tackling \eqref{eq:problem} is the stochastic gradient method with random reshuffling ($\RR$); see, e.g., \cite{bertsekas2011,bottou2012,haochen2019random,gurbu2019,sun2020optimization}. In the following, we review the algorithmic scheme of $\RR$.

To accommodate the large-scale nature of the contemporary applications, $\RR$ implements a gradient descent-type scheme, but in each update it uses only one (or a minibatch) of the component functions rather than all the components. To describe the algorithmic scheme of $\RR$, we define the set of all possible permutations of $\left\{ 1,2, \dots ,n\right\} $ as
\begin{equation}\label{eq:permutations}
   \Pi := \left\{ \pi : \pi \text{ is a permutation of }  \left\{ 1,2, \dots ,n\right\}  \right\}.
\end{equation}
At the $t$-th iteration/epoch\footnote{By convention in $\RR$, an epoch is also called an iteration. We use both names interchangeably in this paper. By contrast, we call each inner update step an inner iteration.}, $\RR$ first samples a permutation $\pi_{t}$ from $\Pi$ uniformly at random. Then, it starts with an initial inner iterate $x_t^0 = x_t$ and updates $x_t $ to $x_{t+1}$ by consecutively applying the gradient descent-type steps as
\begin{equation}\label{eq:RR update}
  x_t^{i} = x_t^{i-1} -\alpha_t \nabla f_{\pi _t^{i} } (x_t^{i-1})
\end{equation}
for $i = 1,\ldots, n$, yielding $x_{t+1} = x_t^n$. We display the pseudo code of $\RR$ in \Cref{alg:RR}.

Let us mention that the deterministic counterpart of $\RR$, namely, the incremental gradient method, is also widely used in practice and has received considerable attention in the past decades; see, e.g., \cite{nedic2001incremental,bertsekas2011,gurbuzbalaban2015,mokhtari2018surpassing} and the references therein. 

\IncMargin{1em}
\begin{algorithm}[t]
	\caption{$\RR$: Random Reshuffling}\label{alg:RR}
	\KwIn{Initial point $x_0\in \mathbb{R}^{d}$}
	\For{$t= 0,1,\ldots$}{
		Sample $\pi_t = \{\pi_t^{1} , \dots, \pi_t^{n}\}$ uniformly at random from $\Pi$ defined in \eqref{eq:permutations}\;
		Update the step size $\alpha_t$ according to a certain rule\;
		Set $x_t^0 = x_t$\;
		\For{$i = 1, \ldots, n$}{
			$x_t^{i} = x_t^{i-1} -\alpha_t \nabla f_{\pi _t^{i} } \left(x_t^{i-1} \right)$\tcc*[r]{update}
		}
		Set $x_{t+1}= x_t^{n} $\;
	}
\end{algorithm}
\DecMargin{1em}

The primary difference between $\RR$ and SGD lies in that the former employs a uniformly random sampling without replacement. Therefore, $\RR$ is also known as ``SGD without replacement", ``SGD with reshuffling", ``shuffled SGD", etc. This sampling scheme introduces statistical dependence and removes the unbiased gradient estimation property used in SGD, making its theoretical analysis more challenging. Nonetheless, $\RR$ empirically outperforms SGD \cite{bottou2009curiously,recht2013parallel} and the gradient descent method \cite{bertsekas2011} on many practical problems. Such a superior practical performance over SGD arises partly from the fact that the random reshuffling sampling scheme is simpler and faster to implement than sampling with replacement used in SGD, and partly from the property that $\RR$ utilizes all the training samples at each iteration. Owing to these advantages, $\RR$ has been incorporated into prominent software packages like PyTorch and TensorFlow as a fundamental optimizer and is utilized in a wide range of engineering fields, most notably in training neural networks; see, e.g., \cite{bertsekas2011,bottou2012,gurbu2019,sun2020optimization}.

\textbf{Motivations.} Despite its widespread practical usage, the theoretical understanding of $\RR$ in the nonconvex setting has been mainly limited to in-expectation complexity bounds and almost sure asymptotic convergence results. Though these results provide insightful characterizations of the performance of $\RR$, they either apply to the average case or are of asymptotic nature, differing partly from the practice where one only runs the method once for a finite number of iterations. In this work, we aim to establish high probability guarantees for $\RR$, providing both an ergodic complexity bound for finding an approximate stationary point and a simple stopping criterion for adaptively terminating the method.

\subsection{Our Results}\label{sec:main contributions}

Throughout this paper, we work under \Cref{Assumption:L smooth}, which is standard for analyzing $\RR$. We summarize our main results and core techniques below.

\textbf{High probability complexity guarantee.} We establish that, \emph{with high probability}, $\RR$ visits an $\varepsilon$-stationary point by achieving $\frac{1}{T}\sum_{t = 0}^{ {T-1}} \|\nabla f(x_t)\|^2\leq \varepsilon^2$ (without taking expectation) using at most $\tilde{\mathcal{O}} (\max \{ \sqrt{n}\varepsilon^{-3}, n \varepsilon ^{-2}\} )$ stochastic gradient evaluations (see \Cref{theo:constant-high prob complexity}). Here, the ``$\tilde{\mathcal{O}}$" hides logarithmic terms in the number of components $n$,  stationarity accuracy $1 /\varepsilon$, and probability parameter $1/\delta$. It is worth noting that our high probability sample complexity {matches} the best existing in-expectation complexity of $\RR$\footnote{Here, it refers to the in-expectation complexity for the original $\RR$. There are improved complexity results for different algorithmic oracles such as variance reduction method with $\RR$'s sampling scheme and $\RR$ with a specifically searched permutation order at each epoch; see, e.g., \cite{huang2021improved,malinovsky2021random,lu2022grab}.} \cite{mishchenko2020random,nguyen2020unified} up to a logarithmic term, under the same Lipschitz gradient assumption. Importantly, our result applies to every single run of $\RR$ with high probability, in contrast to the in-expectation results that average infinitely many runs. Our analysis does not impose any additional assumptions on the stochastic gradient errors nor any modifications to the $\RR$'s updating rule.

\textbf{Stopping criterion and last-iterate guarantee.} To further refine the above non-last iterate complexity result, we design a simple \emph{stopping criterion} embedded with a computable stopping test for $\RR$. This criterion terminates $\RR$ when the Euclidean norm of the accumulated stochastic gradients falls below a preset accuracy $\eta \varepsilon$, where $\eta > 0$ is some constant tolerance (e.g., $\eta =1$). $\RR$ equipped with such a stopping criterion is denoted as $\RRsc$, which introduces few additional computational loads compared to $\RR$. By designing a horizon-free blockwise step size rule, we prove that the stopping criterion must be triggered within $\tilde{\mathcal{O}} (\max \{ \sqrt{n}(\eta\varepsilon)^{-3}, n (\eta\varepsilon) ^{-2}\} )$ stochastic gradient evaluations with high probability (see \Cref{prop:stop RR dbl,thm:stop RR dbl}), aligning with the above sample complexity bound. Consequently, we establish a \emph{last-iterate} result which states that once $\RRsc$ is terminated by our stopping criterion, the returned iterate $x_\tau$ satisfies $\|\nabla f(x_\tau)\|\leq \mathcal O(\eta\varepsilon)$ (see \Cref{theo:last iterate dbl}). By contrast, verifying a stopping criterion could be intractable in the traditional in-expectation analysis of $\RR$ as the convergence bound is characterized in expectation, which is in general not computable in practice.

The key to deriving the aforementioned results is a new \emph{concentration property for random reshuffling} (see \Cref{sec:concentration}). This development properly exploits the randomness of the without-replacement sampling scheme of $\RR$ and yields concentration bounds with benign dependence on $n$ for the stochastic gradient errors, without using any additional assumptions beyond those used in $\RR$'s traditional in-expectation analysis nor modifying $\RR$'s original updating rule. It then allows us to provide a single-run analysis by conditioning on the randomness of $\RR$, forming the foundation for deriving our high probability results.
We believe that this technical development could provide insight on analyzing other stochastic optimization methods that involve sampling without replacement.

\subsection{Prior Arts}\label{sec:related works}

Thanks to its wide applications in large-scale optimization problems such as training neural networks, $\RR$ has gained significant attention recently for understanding its theoretical properties. Below, we present an overview of these theoretical findings, which is necessarily not exhaustive due to the extensive body of research on this topic.

\textbf{Complexity guarantees in expectation.} Unlike SGD that uses unbiased stochastic gradients, one of the main challenges in analyzing $\RR$ lies in the dependence between the stochastic gradients at each iteration. Various works have focused on deriving complexity bounds for $\RR$; see, e.g., \cite{haochen2019random,mishchenko2020random,nguyen2020unified,safran2020good,rajput2020closing, cha2023tighter}. For instance, the work \cite{mishchenko2020random} establishes an $\mathcal{O}(\sqrt{n} /\varepsilon)$ sample complexity for driving the expected squared distance between the iterate and the optimal solution below $\varepsilon$, under the assumptions that the objective function $f$ is strongly convex and each $f_i$ has Lipschitz continuous gradient. The authors concluded that $\RR$ outperforms SGD in this setting when $\varepsilon ^{-1} $ is relatively large based on this complexity result. In the smooth nonconvex case where $f$ is nonconvex and each $f_i$ has Lipschitz continuous gradient, it was shown in \cite{mishchenko2020random,nguyen2020unified} that $\RR$ has a sample complexity of $\mathcal{O}(\max\{\sqrt{n} \varepsilon ^{-3}, n\varepsilon^{-2} \} )$ for driving the expected Euclidean norm of the gradient below $\varepsilon$. However, all the mentioned complexity results for $\RR$ hold in expectation, characterizing the performance of the algorithm by averaging infinitely many runs. Hence, they may not effectively explain the performance of a single run of $\RR$. 

\textbf{Asymptotic convergence.} For strongly convex objective function with component Hessian being Lipschitz continuous, the work \cite{gurbu2019} presents that the squared distance between the q-suffix averaged iterate and the optimal solution converges to $0$ at a rate of $\mathcal{O}({1}/{t^2})$ with high probability, given that the sequence of iterates generated by $\RR$ is uniformly bounded. In the smooth nonconvex case, the almost sure asymptotic convergence result for the gradient norm was derived in \cite{li2022unified}. Additionally, the work \cite{li2023convergence} proves the almost sure asymptotic convergence rate results for $\RR$ under the Kurdyka-{\L}ojasiewicz inequality. The almost sure convergence result was further extended to a proximal-version of $\RR$ in \cite{qiu2025new}.  Though these asymptotic convergence results provide valuable theoretical guarantees, they primarily offer insights into the long-term/asymptotic behavior of the algorithm.

\textbf{High probability complexity guarantees.} Recently, there has been growing interest in studying high probability convergence behaviors for stochastic optimization methods, with a primary focus on SGD-type methods. The works \cite{ghadimi2013stochastic}, \cite{harvey2018tight}, and \cite{li2020high} obtain high probability complexity bounds for smooth nonconvex SGD, nonsmooth strongly convex SGD, and nonconvex adaptive SGD with momentum, respectively, all under the sub-Gaussian tailed stochastic gradient errors assumption.
However, such sub-Gaussian tail-type assumptions may be too optimistic in practice \cite{zhang2020adaptive}. Recent studies rely on the more practical standard bounded variance assumption for stochastic gradient errors. For smooth convex problems, \cite{eduard2020heavytail} analyzes clipped-SGD with momentum or a large batch size. For smooth nonconvex problems, \cite{cutkosky2021high} studies clipped-SGD with momentum and normalization. The work \cite{hubler2025gradient} further relaxes the need of clipping operation to normalization. One can observe that these analyses either impose the stringent sub-Gaussian tail-type assumptions on the stochastic gradient errors or require modifications to the algorithms. For $\RR$, the work \cite{lu2022general} treats it within a broader example-selection framework and verify an abstract average gradient error condition under a pointwise bounded gradient error assumption; by contrast, we directly exploit the randomness of the without-replacement sampling scheme for the original RR method itself to derive a benign concentration property without extra bounded error assumptions.

\textbf{Stopping criterion for the last iterate.} There are stopping criteria for nonconvex SGD-type methods; see, e.g., \cite{yin1990stopping,patel2022stopping,hunter2019automate} and the references therein. These proposals are either about discussing statistical stationarity or suggesting an asymptotic gradient-based stopping criterion. For $\RR$, the work \cite{liu2024last} provides last-iterate results in the convex setting. However, to our knowledge, a stopping criterion for $\RR$ in the nonconvex setting has not been studied. 

In summary, the existing literature still leaves two gaps for $\RR$ in smooth nonconvex finite-sum optimization. First, current complexity guarantees are either in-expectation or do not provide a direct single-run high probability theory for the original $\RR$ method that explicitly exploits the concentration property created by its without-replacement sampling scheme. Second, a simple adaptive stopping rule for $\RR$ with a non-asymptotic guarantee on the returned last iterate is still missing. In this paper, we address both gaps by establishing a high probability complexity guarantee for the original $\RR$ method through a direct analysis of its sampling-induced benign concentration, and by developing a simple stopping criterion that leads to a non-asymptotic last-iterate guarantee.

\section{Concentration Property for Random Reshuffling}\label{sec:concentration}
In this section, we study the concentration property of the stochastic gradient errors of $\RR$, which serves as the foundation for establishing our high probability complexity results.

We first provide a preparatory result below, which is a without-replacement matrix Bernstein's inequality that refines the result \cite{gross2010note}. We state it and prove it here in the exact form used in our later analysis. In particular, it features an intrinsic-dimension prefactor $\tilde d$ instead of the ambient dimension $d$, and hence yields better constants in our $\RR$ application.

\begin{lemma}[without-replacement matrix Bernstein's inequality]\label{lemma:concentration}
Let the set $\left\{X_1, \dots , X_{n} \right\}$ be a finite set of symmetric matrices. Suppose that the set is centered (i.e., $\overline{X} = \sum_{i=1}^{n} X_i /n=0$) and has a uniformly bounded operator $\ell_2$-norm $\| X_i \|_{\op} \le b$, $\forall i$. Suppose further that the permutation $\pi$ is sampled uniformly at random from $\Pi$ defined in \eqref{eq:permutations}. For any $1\le m \le n$ and $s\geq \sqrt{\lambda m /n} + b /3$, we have
\begin{equation}
\label{eq:without replacement sampling}
\mathbb{P} \left[ \lambda_{\max} \left( {\sum}_{i=1}^{m} X_{\pi^{i} }  \right) \ge s\right] \le 4 \tilde{d} \exp \left(- \frac{s^2 /2}{\lambda m/n+ bs /3}\right).
\end{equation}
Here, $\lambda m/n = \|V\|_{\op}$, i.e., the largest eigenvalue of the matrix $V = \frac{m}{n}\sum_{i=1}^{n} X_i^2$ and $\tilde{d} = {\operatorname{tr}(V)}/{\| V\|_{\op}} \geq 1$ is the intrinsic dimension of $V$.
\end{lemma}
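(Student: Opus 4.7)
The plan is to reduce the without-replacement sum to an independent, with-replacement sum via Hoeffding's combinatorial inequality in its matrix form (Gross--Nesme), and then invoke Tropp's intrinsic-dimension matrix Bernstein inequality on the reduced sum.

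First, I would construct the i.i.d. companion sequence. Let $Y_1,\ldots,Y_m$ be drawn independently and uniformly from $\{X_1,\ldots,X_n\}$ with replacement. Because the family is centered, $\mathbb{E}[Y_k]=0$; by assumption $\|Y_k\|_{\op}\le b$; and $\sum_{k=1}^m \mathbb{E}[Y_k^2] = \frac{m}{n}\sum_{i=1}^n X_i^2 = V$. Hence the variance proxy $\|V\|_{\op}$, its trace, and the intrinsic dimension $\tilde d = \operatorname{tr}(V)/\|V\|_{\op}$ are the same quantities that appear in the target bound.

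Second, I would apply the matrix version of Hoeffding's convex reduction (Gross--Nesme, also a direct consequence of Hoeffding's 1963 argument applied coordinatewise to spectral functionals): for every convex function $F$ on the space of symmetric matrices,
\begin{equation*}
\mathbb{E}_\pi\!\left[F\Bigl(\sum_{i=1}^m X_{\pi^i}\Bigr)\right] \le \mathbb{E}_Y\!\left[F\Bigl(\sum_{k=1}^m Y_k\Bigr)\right].
\end{equation*}
In particular, this holds for the trace exponential $F(M)=\operatorname{tr}\exp(\theta M)$ (convex in $M$), which is the object driving every Laplace-transform proof of matrix Bernstein-type bounds, including the intrinsic-dimension refinement. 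Thus any tail bound that is obtained from the independent case by a convex matrix Laplace transform argument is automatically inherited by the without-replacement sum.

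Third, I would invoke Tropp's intrinsic-dimension matrix Bernstein inequality (see, e.g., Theorem 7.3.1 of Tropp's \emph{Introduction to Matrix Concentration Inequalities}) for the i.i.d.\ sum $\sum_k Y_k$, which gives exactly
\begin{equation*}
\mathbb{P}\!\left[\Bigl\|\sum_{k=1}^m Y_k\Bigr\|_{\op}\ge s\right] \le 4\tilde d\,\exp\!\left(-\frac{s^2/2}{\lambda m/n + bs/3}\right).
\end{equation*}
Combining the trace-exponential MGF bound produced by this argument with the Gross--Nesme transfer and a standard Markov step optimized over $\theta>0$ yields the same tail bound for $\|\sum_{i=1}^m X_{\pi^i}\|_{\op}$, which is precisely \eqref{eq:without replacement sampling}.

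The main technical obstacle is not the algebra but checking that the intrinsic-dimension pre-factor $\tilde d$ (rather than the ambient dimension $d$) actually survives the reduction. The delicate point is that Tropp's refinement replaces $\operatorname{tr}\exp$ by a modified convex spectral functional $\operatorname{tr}\psi(\cdot)$ tailored to capture the effective dimension of $V$; one must verify that this functional is still convex in the matrix argument so that Gross--Nesme's convex-function inequality applies. Since $\psi$ is constructed as an operator-convex function of Hermitian matrices and trace composed with an operator-convex function is convex, this verification goes through, and the intrinsic-dimension factor $\tilde d$ transfers from the with-replacement sum to the without-replacement sum without change.
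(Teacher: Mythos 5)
Your proposal follows essentially the same route as the paper: reduce the without-replacement permutation sum to an i.i.d.\ with-replacement sum via the Hoeffding/Gross--Nesme inequality for convex matrix functionals, then invoke Tropp's intrinsic-dimension matrix Bernstein inequality on the i.i.d.\ sum; the paper makes the identical reduction through the matrix MGF $\theta\mapsto\mathbb{E}\bigl[\operatorname{tr}\bigl(\exp\bigl(\theta\sum_i X_{\sigma^i}\bigr)-I\bigr)\bigr]$, citing Hoeffding's Theorem~4 and Gross's note. One small correction to your closing remark: the matrix exponential is \emph{not} operator convex, so appealing to operator convexity of $\psi$ is not the right justification; the convexity of $M\mapsto\operatorname{tr}(\exp(\theta M)-I)$ on symmetric matrices follows instead from the elementary fact that $A\mapsto\operatorname{tr} f(A)$ is convex whenever the scalar function $f$ is convex, and that (scalar, not operator) convexity is all the Gross--Nesme transfer requires.
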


\begin{proof}
Let
\[
Y_\pi:=\sum_{i=1}^m X_{\pi^i},
\qquad
Y_\sigma:=\sum_{i=1}^m X_{\sigma^i},
\]
where $\sigma^1,\ldots,\sigma^m$ are sampled i.i.d.\ and uniformly from $\{1,\ldots,n\}$ with replacement. Since $\overline X=0$ and $\|X_i\|_{\op}\le b$ for all $i$, the summands $\{X_{\sigma^i}\}_{i=1}^m$ are independent, centered, symmetric random matrices satisfying
\[
\left\|X_{\sigma^i}\right\|_{\op}\le b,
\qquad
\sum_{i=1}^m \mathbb E[X_{\sigma^i}^2]
=
\frac{m}{n}\sum_{i=1}^n X_i^2
=:V .
\]
Hence, $\lambda m/n=\|V\|_{\op}$ and $\tilde d=\operatorname{tr}(V)/\|V\|_{\op}$ is the intrinsic dimension of $V$. By the matrix Laplace transform \cite[Proposition 7.4.1]{tropp2015matrixconcentration}, for any $\theta>0$,
\[
\mathbb P\!\left[\lambda_{\max}(Y_\sigma)\ge s\right]
\leq \frac{1}{\exp (\theta s) - \theta s -1} \mathbb{E}_{\sigma } \left[\operatorname{tr} \left(e^{\theta Y_\sigma}-I\right)\right],
\]
where $\operatorname{tr} (e^{ \theta M} - I)$ is the corresponding trace exponential moment. As derived in \cite[Theorem 7.7.1]{tropp2015matrixconcentration}, optimizing the right-hand side over $\theta>0$, we obtain
\[
\mathbb P\!\left[\lambda_{\max}\left(Y_\sigma\right)\ge s\right]
\le
4\tilde d \exp\!\left(-\frac{s^2/2}{\lambda m/n+bs/3}\right),
\quad \text{when} \quad 
s\ge \sqrt{\lambda m/n}+b/3.
\]

It remains to transfer this bound from the i.i.d.\ sample $Y_\sigma$ to the without-replacement sample $Y_\pi$. By the matrix Laplace transform method again and the fact $\mathbb E_\pi[Y_\pi]=0$, for any $\theta>0$ and $s>0$,
\[
\mathbb P\!\left[\lambda_{\max}(Y_\pi)\ge s\right]
\le
\frac{1}{e^{\theta s}-\theta s-1}\,
\mathbb E_\pi\!\left[\operatorname{tr}\!\left(e^{\theta Y_\pi}-\theta Y_\pi-I\right)\right] = \frac{1}{e^{\theta s}-\theta s-1}\,
\mathbb E_\pi\!\left[\operatorname{tr}\!\left(e^{\theta Y_\pi}-I\right)\right].
\]
For any $\theta\in\mathbb R$, the mapping
$
M\mapsto \operatorname{tr}(e^{\theta M}-I)
$
is convex on the space of symmetric matrices. Therefore, Hoeffding's comparison principle for sampling without replacement \cite[Theorem 4]{hoeffding1963ineq} and \cite{gross2010note} yields
\[
\mathbb E_\pi\!\left[\operatorname{tr}(e^{ \theta Y_\pi}-I)\right]
\le
\mathbb E_\sigma\!\left[\operatorname{tr}(e^{ \theta Y_\sigma}-I)\right].
\]
Combining the above two inequalities provides 
\begin{equation}\label{eq:wr tmp}
\mathbb P\!\left[\lambda_{\max}(Y_\pi)\ge s\right]
\le
\frac{1}{e^{\theta s}-\theta s-1}
\mathbb E_\sigma\!\left[\operatorname{tr}(e^{\theta Y_\sigma}-I)\right].
\end{equation}
Since \eqref{eq:wr tmp} holds for every $\theta>0$, optimizing its right-hand side over $\theta>0$ in the same way as for $Y_\sigma$ yields exactly \eqref{eq:without replacement sampling}. 
\end{proof}

\textbf{Concentration property of the stochastic gradient errors in random reshuffling.} We now provide one of the main results of this work, i.e., bound the stochastic gradient errors in $\RR$. Towards that end, let us introduce two important quantities associated with the $t$-th epoch of $\RR$: 1) the accumulation of the stochastic gradients $g_t$, and 2) the gradient error $e_t$ caused by using $g_t$ to approximate the true gradient $\nabla f(x_t)$. They are defined as
\begin{equation}\label{eq:gt and et}
    \left[
    \begin{aligned}
        &g_t= \frac{1}{n}{\sum}_{i=1}^{n} \nabla f_{\pi_t ^{i}}(x_t^{i-1}),\\
        &e_t = g_t- \nabla f(x_t).
    \end{aligned}
    \right.
\end{equation}

The following theorem is a critical step for establishing our high probability complexity results. For the original $\RR$ method and under the standard variance-type control \eqref{eq:ABC condition} implied by Lipschitz gradients, it explicitly exploits the randomness of the without-replacement sampling scheme in $\RR$ and derive a concentration bound with a benign dependence on $n$ for $\RR$'s cumulative stochastic gradient errors. This is the key to deriving a sample complexity bound that has a better dependence on the number of component functions/data points $n$ compared to the deterministic incremental gradient method.

\begin{theorem}[concentration property of stochastic gradient errors]\label{thm:RR sampling gradients}
Let $\pi$ be sampled uniformly at random from $\Pi$ defined in \eqref{eq:permutations}.  Suppose that \Cref{Assumption:L smooth} is valid. Then, for any $x\in \mathbb{R}^d$ and $1\le m\le n$, the following inequality holds with probability at least $1-\delta$:
\begin{equation}\label{eq:RR sampling gradients}
   \left\| {\sum}_{i=1}^{m}\left(\nabla f_{\pi^{i} } (x)- \nabla f(x)\right) \right\|^2 \le 4n \left(\sA\left(f(x)- \bar f \right)+\sB\right)\log^2 ({8}/{\delta}).
\end{equation}
\end{theorem}

\begin{proof}
Fix any $x\in \mathbb{R}^d$, we define
\[
u_i := \nabla f_i(x)-\nabla f(x),\quad i=1,\ldots,n.
\]
Then,
it follows 
$
\sum_{i=1}^n u_i
=
\sum_{i=1}^n \nabla f_i(x)-n\nabla f(x)
=0.
$
For each $i\in [n]$, we construct
\begin{equation}\label{eq:construct Xi}
X_i :=
\begin{bmatrix}
0_{d\times d} & u_i\\
u_i^\top & 0
\end{bmatrix}
\in \mathbb{R}^{(d+1)\times (d+1)}.
\end{equation}
We also define
\[
\gamma := \sum_{i=1}^n \|u_i\|^2.
\]
If $\gamma=0$, then $u_i=0$ for all $i\in[n]$, and hence
$
\sum_{i=1}^m \bigl(\nabla f_{\pi^i}(x)-\nabla f(x)\bigr)=0.
$
Therefore, the desired inequality is trivially true. In the rest of the proof, we assume that $\gamma>0$.

For the constructed $X_i$ in \eqref{eq:construct Xi}, let us verify the conditions in \cref{lemma:concentration} and specify the values for $b$, $\lambda$, and $\tilde d$. Immediately, we have that $X_i$ is symmetric and satisfies 
\begin{equation}\label{eq:centered}
\overline X
=
\frac{1}{n}\sum_{i=1}^n X_i
=
\begin{bmatrix}
0_{d\times d} & \frac{1}{n}\sum_{i=1}^n u_i\\[1mm]
\left(\frac{1}{n}\sum_{i=1}^n u_i\right)^\top & 0
\end{bmatrix}
=0.
\end{equation}
Moreover, if $u_i=0$, then $X_i=0$. If $u_i\neq 0$, then we can compute
\begin{equation}\label{eq:eigenvalue calculation}
X_i
\begin{bmatrix}
u_i/\|u_i\|\\
1
\end{bmatrix}
=
\|u_i\|
\begin{bmatrix}
u_i/\|u_i\|\\
1
\end{bmatrix},
\quad
X_i
\begin{bmatrix}
-u_i/\|u_i\|\\
1
\end{bmatrix}
=
-\|u_i\|
\begin{bmatrix}
-u_i/\|u_i\|\\
1
\end{bmatrix}.
\end{equation}
It is immediate to verify that $\operatorname{rank}(X_i)\le 2$. Hence, when $u_i\neq 0$, the matrix $X_i$ has exactly two nonzero eigenvalues, namely $\|u_i\|$ and $-\|u_i\|$, and all remaining eigenvalues are zero. Therefore, we obtain
\begin{equation}\label{eq:variance bound rv}
\|X_i\|_{\op}
=
\|u_i\|
\le
\sqrt{{\sum}_{j=1}^n \|u_j\|^2}
\le
\sqrt{n\Bigl(\sA\bigl(f(x)-\bar f\bigr)+\sB\Bigr)}
=: b.
\end{equation}
where the last inequality is by \eqref{eq:ABC condition}. Next, for each $i\in[n]$, we have
$
X_i^2
=
\begin{bmatrix}
u_i u_i^\top & 0\\
0 & \|u_i\|^2
\end{bmatrix}
$, and hence 
\[
V = \frac{m}{n}\sum_{i=1}^n X_i^2
=
\frac{m}{n}\begin{bmatrix}
\sum_{i=1}^n u_i u_i^\top & 0\\
0 & \gamma
\end{bmatrix}.
\]
By the definition of $\lambda$ in \Cref{lemma:concentration}, we have $\lambda = \lambda_{\max}(\sum_{i=1}^n X_i^2)$. Note that $\sum_{i=1}^n X_i^2$ is a block diagonal matrix, and hence its eigenvalues are the union of the eigenvalues of the block $\sum_{i=1}^n u_i u_i^\top$ and the scaler $\gamma = \sum_{i=1}^n \|u_i\|^2$; see, e.g., \cite{horn2012matrix}. The fact that $
\lambda_{\max}\!\left(\sum_{i=1}^n u_i u_i^\top\right)
\le \operatorname{tr}\!\left(\sum_{i=1}^n u_i u_i^\top\right)
= \sum_{i=1}^n \|u_i\|^2 = \gamma$ shows that the largest eigenvalue of $\sum_{i=1}^n X_i^2$ is exactly $\gamma$. Therefore, we have
\begin{equation}\label{eq:op of V}
\lambda = \gamma \le n\Bigl(\sA\bigl(f(x)-\bar f\bigr)+\sB\Bigr)=b^2.
\end{equation}
where we have used \eqref{eq:ABC condition} in the inequality. 
The above calculation also directly gives 
$\|V\|_{\op}=\frac{m}{n}\lambda=\frac{m}{n}\gamma$ and 
$\operatorname{tr}(V)=\frac{2m}{n}\gamma$.
Since $\gamma>0$, the intrinsic dimension of $V$ is
\begin{equation}\label{eq:intrinsic dim}
\tilde d := \frac{\operatorname{tr}(V)}{\|V\|_{\op}}=2.
\end{equation}
Hence, combining \eqref{eq:centered}, \eqref{eq:variance bound rv}, \eqref{eq:op of V}, and \eqref{eq:intrinsic dim}, we have verified that the matrices $X_i$ in \eqref{eq:construct Xi} satisfy all conditions required in \Cref{lemma:concentration} with
\begin{equation}\label{eq:values for lemma1}
b=\sqrt{n\Bigl(\sA\bigl(f(x)-\bar f\bigr)+\sB\Bigr)},
\quad
\lambda\le b^2,
\quad
\tilde d=2.
\end{equation}

Now, let
\[
\ell_\delta := \log(8/\delta),
\qquad
s := 2b\ell_\delta.
\]
Since $\delta\le 1$, we have $\ell_\delta\ge \log 8 >1$. Using $\lambda \leq b^2$ in \eqref{eq:values for lemma1} and the fact that $m\le n$, we get
\[
\sqrt{\lambda m/n} + b/3 <2b \le 2b\ell_\delta = s.
\]
Therefore, the condition on $s$ in \Cref{lemma:concentration} is satisfied, and thus
\[
\mathbb{P}\!\left[
\lambda_{\max}\!\left(\sum_{i=1}^m X_{\pi^i}\right)\ge s
\right]
\le
4\tilde d
\exp\!\left(
-\frac{s^2/2}{\lambda m/n + bs/3}
\right).
\]
Using again \eqref{eq:values for lemma1} and $m\le n$, we further obtain
\begin{align*}
4\tilde d
\exp\!\left(
-\frac{s^2/2}{\lambda m/n + bs/3}
\right)
&\le
8\exp\!\left(
-\frac{2b^2\ell_\delta^2}{b^2+\frac{2}{3}b^2\ell_\delta}
\right)\\
&=
8\exp\!\left(
-\frac{2\ell_\delta^2}{1+\frac{2}{3}\ell_\delta}
\right)\\
&\le
8e^{-\ell_\delta}
=
\delta,
\end{align*}
where the last inequality follows from $\ell_\delta> 1$. Consequently, it follows
\begin{equation}\label{eq:tail bound}
\mathbb{P}\!\left[
\lambda_{\max}\!\left({\sum}_{i=1}^m X_{\pi^i}\right)
\ge
2b\log(8/\delta)
\right]
\le
\delta.
\end{equation}

Finally, by noting 
$
{\sum}_{i=1}^m X_{\pi^i}
=
\begin{bmatrix}
0_{d\times d} & {\sum}_{i=1}^m u_{\pi^i}\\
\left({\sum}_{i=1}^m u_{\pi^i}\right)^\top & 0
\end{bmatrix}
$ 
and applying the same eigenvalue calculation as in \eqref{eq:eigenvalue calculation} with
$
u := \sum_{i=1}^m u_{\pi^i}
$,
we obtain
\[
\lambda_{\max}\!\left({\sum}_{i=1}^m X_{\pi^i}\right)
=
\left\|{\sum}_{i=1}^m u_{\pi^i}\right\|
=
\left\|{\sum}_{i=1}^m \bigl(\nabla f_{\pi^i}(x)-\nabla f(x)\bigr)\right\|.
\]
Substituting this identity and the definition of $b$ from \eqref{eq:values for lemma1} into \eqref{eq:tail bound} and then squaring both sides, we conclude that, with probability at least $1-\delta$,
\[
\left\| {\sum}_{i=1}^{m}\bigl(\nabla f_{\pi^{i}}(x)- \nabla f(x)\bigr) \right\|^2
\le
4n \Bigl(\sA\bigl(f(x)- \bar f \bigr)+\sB\Bigr)\log^2 (8/\delta).
\]
This completes the proof. 
\end{proof}

Based on \Cref{thm:RR sampling gradients}, we can then explicitly invoke the scheme of $\RR$ and establish the following high probability bound for $\RR$'s gradient error $e_t$ defined in \eqref{eq:gt and et}.

\begin{corollary}[concentration property for random reshuffling]\label{lemma:stochastic error}
  Suppose that \Cref{Assumption:L smooth} is valid and the step size $\alpha_t$ in $\RR$ satisfies
  \begin{equation}\label{eq:step size cond}
     4 \alpha_t n \sL \le 1.
  \end{equation}
  Then, with probability at least $1- \delta$, we have
  \begin{equation}\label{eq:concentrate et}
	 \| e_t\|^2 \leq 2 \alpha_t^2 n^2 \sL^2
      \| \nabla f(x_t) \|^2 + 32 \alpha_t^2 n \sL^2 \left(\sA\left(f(x_t)- \bar f \right) + \sB\right)\log ^2 ({8n}/{\delta}).
   \end{equation}
\end{corollary}

\begin{proof}
By the definition of $e_t$, we have
\begin{equation}
    \| e_t \|^2  = \left\| \frac{1}{n} {\sum}_{i=1}^{n} \nabla f_{\pi_t^{i}  } (x_t^{i-1})- \frac{1}{n} {\sum}_{i=1}^{n}  \nabla f_{\pi_t^{i}  } (x_t) \right\|^2.
\end{equation}
Let us define $\Delta_t = \sum_{i=1}^{n} \| \nabla f_{\pi _t^{i} } (x_t^{i-1})-\nabla f_{\pi _t^{i} } (x_t) \|^2$. By the smoothness of $f_i$ in \Cref{Assumption:L smooth}, we obtain
\begingroup
\allowdisplaybreaks
\begin{align}\label{eq:sum of norm error}
	\Delta _t & \le \sL^2 {\sum}_{i=1}^{n} \| x_t^{i-1} -x_t \|^2 = \alpha_{t}^2 \sL^2 {\sum}_{i=1}^{n} \left\| {\sum}_{j=1}^{i-1} \nabla f_{\pi _t^j } (x_t^{j-1}) \right\|^2 \nonumber\\
	&\le 2\alpha_{t}^2 \sL^2 {\sum}_{i=1}^{n} \left(\left\| {\sum}_{j=1}^{i-1} \nabla f_{\pi _t^{j} } (x_t^{j-1})- \nabla f_{\pi _t^{j} } (x_t) \right\|^2 + \left\| {\sum}_{j=1}^{i-1} \nabla f_{\pi _t^{j} } (x_t) \right\|^2\right) \\
	&\le  2\alpha_{t}^2 \sL^2 {\sum}_{i=1}^{n} \left((i-1){\sum}_{j=1}^{i-1} \left\| \nabla f_{\pi_t^{j} } (x_t^{j-1})-\nabla f_{\pi_t^{j}}(x_t) \right\|^2\right. \nonumber \\ &\quad \left. + 2 \left\| {\sum}_{j=1}^{i-1} \left(\nabla f_{\pi _t^{j} } (x_t)-\nabla f(x_t)\right)\right\|^2 + 2 \left(i-1\right)^2 \left\| \nabla f(x_t) \right\|^2\right). \nonumber
\end{align}
\endgroup
Note that we have used the convention that $\sum_{j=1}^{i-1} = 0$ when $i=1$. Based on \eqref{eq:sum of norm error}, we can compute
\begin{align*}
    \Delta_t &\leq \alpha_t^2n^2\sL^2 \Delta_t + 4\alpha_{t}^2 \sL^2 {\sum}_{i=1}^{n} \left\| {\sum}_{j=1}^{i-1} \left(\nabla f_{\pi _t^{j} } (x_t)-\nabla f(x_t)\right)\right\|^2 \nonumber  \\
    &\quad + \frac{4}{3} \alpha_t^2 n^3\mathsf{L}^2 \|\nabla f(x_t)\| ^2,
\end{align*}
where we have used $\sum_{j=1}^{i-1} \| \nabla f_{\pi_t^{j} } (x_t^{j-1})-\nabla f_{\pi_t^{j}}(x_t) \|^2\leq \Delta_t$, $\sum_{i=1}^n (i-1) \leq n^2/2$, and $\sum_{i=1}^n (i-1)^2 \leq n^3/3$. To further provide a bound for the above inequality, we apply \Cref{thm:RR sampling gradients} by scaling the probability parameter from $\delta$ to $\delta/n$ for each $i$, and then apply union bound for all $1\leq i\leq n$. This provides us with probability at least $1-\delta$
\begin{align}\label{eq:union bound for Delta t}
    \Delta_t &\leq \alpha_t^2n^2\sL^2 \Delta_t + 16\alpha_t^2n^2\sL^2 \left(\sA\left(f(x_t)-\bar f\right)+\sB\right)\log ^2 ( {8n}/{\delta}) \nonumber \\
    &\quad +  \frac{4}{3} \alpha_t^2n^3\mathsf{L}^2 \|\nabla f(x_t)\| ^2.
\end{align}
Combining the terms on $\Delta_t$ in \eqref{eq:union bound for Delta t}, dividing both sides by $(1-\alpha_t^2n^2\sL^2)$, and using \eqref{eq:step size cond} provides with probability at least $1-\delta$
\begin{align*}
    \Delta_t \leq 2\alpha_{t}^2 n^3 \sL^2 \left\| \nabla f(x_t) \right\|^2 + 32 \alpha_{t}^2n^2\sL^2\left(\sA\left(f(x_t)-\bar f\right)+\sB\right)\log ^2 ( {8n}/{\delta}),
\end{align*}
 {where we have rounded the coefficients to the nearest upper integers for ease of presentation.} Finally, recognizing $\|e_t\|^2 \leq \frac{1}{n} \Delta_{t}$ establishes \eqref{eq:concentrate et}.
\end{proof}

\section{High Probability Complexity Guarantee for Random Reshuffling}\label{sec:complexity bound}
Using the established concentration property, in this section we first establish a high probability approximate descent property for $\RR$, and then prove a high probability ergodic complexity guarantee.

\textbf{Handling Randomness.} At each epoch $t$, the iterate $x_t$ is measurable w.r.t. the past randomness, while the permutation $\pi_t$ is freshly sampled and independent of $x_t$. Hence, \Cref{lemma:stochastic error} applies conditionally given $x_t$, and we apply union bound over iterations.

\begin{lemma}[approximate descent property]\label{lemma:descent lemma}
  Under the setting of \Cref{lemma:stochastic error}, the following inequality holds with probability at least $1-\delta$:
  \begin{align}\label{eq:descent lemma}
   f(x_{t+1}) - \bar f  &\leq \left(1+ 32  \sL^2 \sA \log^2 \left( {8n}/{\delta}\right)  \alpha_t^3 n^2\right)\left(f(x_t) - \bar f\right) \nonumber \\
   &\quad - \frac{\alpha_t n}{8}\| \nabla f(x_t) \|^2- \frac{\alpha _t n}{2}\left\lVert g_t \right\rVert^2 + 32  \sL^2 \sB \log^2 ( {8n}/{\delta}) \alpha_t ^3 n^2.
\end{align}

\end{lemma}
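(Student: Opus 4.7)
My plan is to combine the single-step smoothness bound for $f$ with the high-probability control on $\|e_t\|^2$ already supplied by \Cref{lemma:stochastic error}. Since each $f_i$ is $\sL$-smooth, so is $f$, and the outer iterate satisfies $x_{t+1}-x_t = -\alpha_t n\, g_t$. The descent lemma for smooth functions then yields
\begin{equation*}
f(x_{t+1}) \leq f(x_t) - \alpha_t n\langle\nabla f(x_t),g_t\rangle + \tfrac{\sL\alpha_t^2 n^2}{2}\|g_t\|^2.
\end{equation*}
I would decompose the inner product via the polarization identity together with $e_t = g_t - \nabla f(x_t)$, which produces $-\langle\nabla f(x_t),g_t\rangle = \tfrac{1}{2}\|e_t\|^2 - \tfrac{1}{2}\|\nabla f(x_t)\|^2 - \tfrac{1}{2}\|g_t\|^2$.

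Next I would eliminate the leftover quadratic term $\tfrac{\sL\alpha_t^2 n^2}{2}\|g_t\|^2$ so that the coefficient of $\|g_t\|^2$ lands at exactly $-\tfrac{\alpha_t n}{2}$. The step-size condition $4\alpha_t n\sL\leq 1$ delivers $\sL\alpha_t^2 n^2 \leq \tfrac{\alpha_t n}{4}$, so applying the elementary inequality $\|g_t\|^2\leq 2\|\nabla f(x_t)\|^2 + 2\|e_t\|^2$ redistributes the residual into at most $\tfrac{\alpha_t n}{4}\|\nabla f(x_t)\|^2 + \tfrac{\alpha_t n}{4}\|e_t\|^2$. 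This collapses the bound into the intermediate form
\begin{equation*}
f(x_{t+1}) - \bar f \leq (f(x_t)-\bar f) - \tfrac{\alpha_t n}{4}\|\nabla f(x_t)\|^2 - \tfrac{\alpha_t n}{2}\|g_t\|^2 + \tfrac{3\alpha_t n}{4}\|e_t\|^2,
\end{equation*}
in which the $\|g_t\|^2$ coefficient already matches the target. Finally, I would invoke \Cref{lemma:stochastic error} on an event of probability at least $1-\delta$ to replace $\tfrac{3\alpha_t n}{4}\|e_t\|^2$ by a multiple of $\alpha_t^3 n^3\sL^2\|\nabla f(x_t)\|^2$ plus a multiple of $\alpha_t^3 n^2\sL^2\log^2(8n/\delta)\cdot(\sA(f(x_t)-\bar f)+\sB)$. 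Using $(\alpha_t n\sL)^2\leq \tfrac{1}{16}$ a second time absorbs the $\|\nabla f(x_t)\|^2$ contribution into $-\tfrac{\alpha_t n}{4}\|\nabla f(x_t)\|^2$, leaving at least $-\tfrac{\alpha_t n}{8}\|\nabla f(x_t)\|^2$. Relaxing the remaining numerical constants up to $32$ reproduces \eqref{eq:descent lemma}.

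I expect the main obstacle to be purely combinatorial bookkeeping: juggling the Young-type inequalities and the step-size condition so that, after absorbing every stray quadratic term, the $\|g_t\|^2$ coefficient rests at exactly $-\tfrac{\alpha_t n}{2}$ while the $\|\nabla f(x_t)\|^2$ coefficient remains no larger than $-\tfrac{\alpha_t n}{8}$. No further probabilistic argument is required; the concentration content is entirely packaged in \Cref{lemma:stochastic error}, and the failure probability $\delta$ is inherited from a single invocation of that lemma.
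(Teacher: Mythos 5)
Your proposal is correct and follows essentially the same route as the paper: the descent lemma for $f$, the polarization identity to expose the $-\tfrac{\alpha_t n}{2}\|g_t\|^2$ term, the step-size condition $4\alpha_t n\sL\leq 1$ to absorb the residual quadratics, and a single invocation of \Cref{lemma:stochastic error}. The only cosmetic difference is that the paper first writes $g_t=\nabla f(x_t)+e_t$ and polarizes $\langle\nabla f(x_t),-e_t\rangle$, whereas you polarize $-\langle\nabla f(x_t),g_t\rangle$ directly and redistribute the leftover $\tfrac{\sL\alpha_t^2 n^2}{2}\|g_t\|^2$ via $\|g_t\|^2\leq 2\|\nabla f(x_t)\|^2+2\|e_t\|^2$; both yield constants within the stated bounds.
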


\begin{proof}
We note that the smoothness condition in \Cref{Assumption:L smooth} implies the descent lemma; see, e.g., \cite[Lemma 1.2.3]{nesterov2003}. Then, we can compute
\begingroup
\allowdisplaybreaks
\begin{align}\label{eq:descent property}
	& f(x_{t+1})\le f(x_t)- \alpha_t \left\langle \nabla f(x_t), {\sum}_{i=1}^{n} \nabla f_{\pi_t^i}(x_t^{i-1}) \right\rangle + \frac{\alpha_t^2 \sL}{2}\left\| {\sum}_{i=1}^{n} \nabla f_{\pi_t^i} (x_t^{i-1}) \right\|^2 \nonumber\\
    &= f(x_t)-\alpha_t \langle \nabla f(x_t), n \nabla f(x_t)+n e_t\rangle+ \frac{\alpha_{t}^2 \sL}{2}\| n \nabla f(x_t)+n e_t \|^2 \\
    &\le f(x_t)- \alpha_t n(1 - \alpha_{t} n \sL)\| \nabla f(x_t) \|^2 + \alpha_{t}^2 n^2 \sL \| e_t \|^2 + \alpha_{t} n \langle \nabla f(x_t), -e_t\rangle \nonumber \\
	&\leq f(x_t) - \frac{3\alpha_t n}{4} \| \nabla f(x_t) \|^2 + \alpha_{t}^2 n^2 \sL\| e_t \|^2 + \frac{\alpha_{t} n}{2}\left(\| \nabla f(x_t) \|^2 +\| e_t \|^2 - \| g_t \|^2\right) \nonumber \\
	&\le f(x_t)- \frac{\alpha_{t} n}{4}\| \nabla f(x_t) \|^2 - \frac{\alpha_{t} n}{2} \|g_t\|^2 + \alpha_{t} n\| e_t \|^2,\nonumber
\end{align}
\endgroup
where the equality is due to the definitions in \eqref{eq:gt and et},  {the third line is from the inequality $\|a+b\|^2 \leq 2 \|a\|^2 + 2 \|b\|^2$,} the fourth line  follows from \eqref{eq:step size cond} and the fact that $\langle a, b\rangle  = \frac{1}{2}(\|a\|^2 + \|b\|^2 - \|a - b\|^2)$,  {and the last inequality is obtained by \eqref{eq:step size cond} and rounding the coefficient of its last term to the nearest upper integer for display purpose.}
Finally, by subtracting $\bar f$ on both sides of the above inequality,   plugging \Cref{lemma:stochastic error} for upper bounding $\|e_t\|^2$, and utilizing \eqref{eq:step size cond}, we obtain the result.
\end{proof}

The approximate descent property in \cref{lemma:descent lemma} has an additional coefficient in the function value term, which does not allow the typical telescoping trick directly. This additional term originates from the general variance-type bound \eqref{eq:ABC condition}. In the next lemma, we refine the above approximate descent property for $\RR$. 

\begin{lemma}[refined approximate descent property]\label{lemma:constant-refined descent lemma}
Suppose that \Cref{Assumption:L smooth} is valid and the step size $\alpha_t$ satisfies
\begin{equation}\label{eq:constant-step rr}
  \alpha_t = \alpha := \min \left\{ \frac{1}{4 n\sL}, \frac{1}{(\sC_1 n^2 T)^{1/3}}\right\},
\end{equation}
 {where $T$ is the total number of iterations/epochs satisfying $T\geq 1$.} Then, with probability at least $1-\delta$, it holds that for all $0 \le t \le T-1$,
\begin{equation}\label{eq:constant-refined descent ineq}
  f(x_{t+1}) \le f(x_t) - \frac{\alpha_t n}{8} \left\lVert \nabla f(x_t) \right\rVert^2 - \frac{\alpha_t n}{2} \left\lVert g_t \right\rVert^2 + \alpha_t^3n^2\sG.
\end{equation}
Here, $\sC_1 = 32\sL^2\sA\log^2 \left( {8nT}/{\delta}\right)\geq 0$ and $\sG = \sC_1 \sF + \sC_2 \geq 0 $ with $\sF = 3(f(x_0) - \bar f) + 3\sB/\sA \geq 0$ and $\sC_2 = 32\sL^2\sB\log^2 \left( {8nT}/{\delta}\right) \geq 0$.
\end{lemma}

\begin{proof}
Dividing the probability parameter $\delta$ by $T$ in \cref{lemma:descent lemma} and then applying union bound for $0 \le t \le T-1$, we obtain
\begin{equation}\label{eq:apply descent}
  f(x_{t+1}) - \bar f  \leq (1+ \alpha_t^3 n^2 \sC_1)\left(f(x_t) - \bar f\right) - \frac{\alpha_t n}{8}\| \nabla f(x_t) \|^2- \frac{\alpha _t n}{2}\left\lVert g_t \right\rVert^2 + \alpha_t ^3 n^2 \sC_2,
\end{equation}
which holds for all $0 \le t\le T-1$ with probability at least $1-\delta$. Our remaining discussion is conditioned on the event in \eqref{eq:apply descent}, and hence all arguments are deterministic. Unrolling the above recursion gives
\begin{align*}
   f(x_t) - \bar f &\leq \left\{{\prod}_{i=0}^{t-1}(1+ \alpha_i^3 n^2 \sC_1)\right\} (f(x_0) - \bar f)  \\
  &\quad + {\sum}_{j=0}^{t-2} \left\{{\prod}_{i=j+1}^{t-1}(1+ \alpha_i^3 n^2 \sC_1)\right\} \alpha_j^3 n^2 \sC_2 + \alpha_{t-1} ^3 n^2 \sC_2.
\end{align*}
 {In the above inequality, we have used the conventions that ${\prod}_{i=0}^{t-1} = 1$ when $t = 0$, $\sum_{j=0}^{t-2} = 0$ when $0\leq t \leq 1$, and $\alpha_{t-1} = 0$ when $t=0$.} By the choice of our step size in \eqref{eq:constant-step rr} and the fact that $t\leq T$, we have
\[
   {\prod}_{i=0}^{t-1}(1+ \alpha_i^3 n^2 \sC_1) = \exp\left({\sum}_{i = 0}^{t-1} \log(1+ \alpha_i^3 n^2 \sC_1) \right) \leq \exp\left({\sum}_{i = 0}^{t-1}\alpha_i^3 n^2 \sC_1\right) \leq 3.
\]
Therefore, combining the above two inequalities provides
\begin{equation}\label{eq:bounded FV}
   f(x_t) - \bar f \leq 3 (f(x_0) - \bar f) + 3 {\sum}_{j = 0}^{T-1} \alpha_j^3n^2\sC_2 \leq 3 (f(x_0) - \bar f) + 3\sB/\sA = \sF
\end{equation}
for all $0\leq t\leq T$.
Plugging this upper bound into \eqref{eq:apply descent} yields \eqref{eq:constant-refined descent ineq}.
\end{proof}

With the developed machineries, we are now ready to establish the high probability ergodic sample complexity of $\RR$ for finding an approximate stationary point of problem \eqref{eq:problem}.

\begin{theorem}[high probability ergodic complexity guarantee for $\RR$]\label{theo:constant-high prob complexity}
Under the setting of \Cref{lemma:constant-refined descent lemma}, with probability at least $1-\delta$, we have
\begin{equation}\label{eq:rate}
  \frac{1}{T}{\sum}_{t=0}^{T-1} \| \nabla f(x_t) \|^2 \le \max \left\{ \frac{45\sL\sF}{T}, \frac{35 \sL^{2/3} \sA^{1/3} \sF \log ^{2 /3} (8nT /\delta)}{n ^{1 /3} T ^{2 /3}}\right\},
\end{equation}
Consequently, to achieve ${\sum}_{t=0}^{T-1} \| \nabla f(x_t) \|^2 /T \leq \varepsilon ^2$, $\RR$ needs at most
\begin{equation}\label{eq:constant-complexity}
n T = \tilde{\mathcal{O}}\left( \max \left\{ \sqrt{n} \varepsilon ^{-3} , n \varepsilon ^{-2}\right\}  \right)
\end{equation}
stochastic gradient evaluations, where $\tilde{\mathcal{O}}$ hides an additional $\log(\sqrt{n}\varepsilon^{-3}/\delta)$.
\end{theorem}

\begin{proof}
Summing up \eqref{eq:constant-refined descent ineq} from $t =0$ to $T-1$ and rearranging terms provide
\[
 \frac{1}{T} {\sum}_{t=0}^{T-1} \| \nabla f(x_t) \|^2 \leq \frac{8(f(x_0)- \bar f)}{\alpha n T} + 8\alpha^2 n \sG.
\]
When $n \geq \frac{\sA}{2\sL} T \log ^2 \left( 8nT /\delta\right)$, the step size $\alpha = 1 /4n\sL$ according to \eqref{eq:constant-step rr} and we have
\begin{equation}
  \label{eq:step1bound}
   \frac{1}{T} {\sum}_{t=0}^{T-1} \| \nabla f(x_t) \|^2 \leq \frac{32\sL(f(x_0) - \bar f)}{T} + \frac{\sG}{2n\sL^2} \leq \frac{45\sL\sF}{T};
\end{equation}
otherwise, $\alpha = 1/(\sC_1 n^2 T)^{1/3}$ and we have
\begin{equation}
\label{eq:step2bound}
  \frac{1}{T} {\sum}_{t=0}^{T-1} \| \nabla f(x_t) \|^2 \leq \frac{35 \sL^{2/3} \sA^{1/3} \sF \log ^{2 /3} (8nT /\delta)}{n ^{1 /3} T ^{2 /3}}.
\end{equation}
Combining the above two complexities gives \eqref{eq:rate}.  Letting the right-hand side of \eqref{eq:rate} equal to $\varepsilon ^2$ yields our final complexity result \eqref{eq:constant-complexity}.
\end{proof}

Our high probability sample complexity result for $\RR$ in  matches the best existing in-expectation one  \cite{mishchenko2020random,nguyen2020unified} up to a logarithmic term, under the same Lipschitz continuity assumption on the component gradients (i.e., \Cref{Assumption:L smooth}). Nonetheless, our result is applicable to every single realization of $\RR$ with high probability, providing a more practical picture of its performance.

Based on the concentration property developed in \Cref{sec:concentration}, it is also possible to obtain high probability results for $\RR$ when utilized to minimize strongly convex and convex functions. As this work mainly focuses on nonconvex setting, we omit the discussions on convex cases.

\section{ Stopping Criterion and Last-Iterate Result}\label{sec:stopp-crit-last}
The formulation of a stopping criterion constitutes a crucial part of algorithm design. In deterministic optimization, designing such a criterion can be relatively straightforward. For instance, one can examine the gradient function in the gradient descent method. However, it becomes significantly more challenging to construct a similar measure in the stochastic optimization regime. In the case of $\RR$, computing the full gradient function for monitoring stationarity is not feasible. Therefore, it necessitates the development of an estimated stopping criterion for $\RR$.

The study of a stopping criterion for $\RR$ is motivated by two factors: 1) It offers an adaptive stopping scheme as opposed to running the algorithm for a fixed number of iterations, potentially saving on execution time. 2) It yields a last-iterate result, which is especially meaningful in nonconvex optimization. We note that our high probability ergodic complexity bound derived in the previous section applies to $\min _{0 \le t \le T-1} \left\lVert \nabla f(x_t) \right\rVert$ rather than the last iterate. This discrepancy introduces the risk of returning the last iterate without satisfying the complexity bound, as also illustrated in \cite[Appendix H]{li2022unified}. Therefore, in this section, utilizing the analysis in the last section, we design a stopping criterion for $\RR$ and establish a high probability complexity guarantee for the last iterate returned by this stopping rule.

\IncMargin{1em}
\begin{algorithm}[t]
	\caption{$\RRsc$: Random Reshuffling with Stopping Criterion} \label{alg:stopping criterion}
	\KwIn{constant tolerance $\eta$,  target accuracy $\varepsilon$\;}
        \kwInit{$x_0\in \mathbb{R}^{d}$, $t = 0$\;}
	\While{true}{
		Set $g_t = 0$\;
		Update the step size $\alpha_t$ according to a certain rule\;
		Sample $\pi_t = \{\pi_t^{1} , \dots, \pi_t^{n}\}$ uniformly at random from $\Pi$ defined in \eqref{eq:permutations}\;
		Set $x_t^0 = x_t$\;
		\For{$i = 1, \ldots, n$}
            {
            $x_t^{i} = x_t^{i-1} -\alpha_t \nabla f_{\pi_t^{i}} \left(x_t^{i-1} \right)$\tcc*[r]{update}
            $g_t= g_t+ \nabla f_{\pi _t^{i} } \left(x_t^{i-1} \right) /n $\;
            }
        \eIf(\tcc*[f]{stopping criterion}){$\left\lVert g_t \right\rVert \le \eta \varepsilon$}
           {
           Set $\tau = t$\;
           \Return $x_\tau$\;
           }
           {
           Set $x_{t+1}= x_t^{n}$\;
           }
        Set $t = t+1$\;
	}
\end{algorithm}
\DecMargin{1em}

\subsection{Random Reshuffling with Stopping Criterion}\label{sec:stop criterion}
Our primary observation from \Cref{lemma:constant-refined descent lemma} is that the accumulation of the stochastic gradients $g_t$ (defined in \eqref{eq:gt and et}) almost mirrors the role of the true gradient for descent. This motivates us to track $g_t$ and use it as a stopping criterion. Importantly, thanks to the concentration property established in \Cref{sec:concentration}, the approximate descent property of $\RR$ in \Cref{lemma:constant-refined descent lemma} holds with high probability, without requiring taking any expectation operations. Consequently, the term $g_t$ in \Cref{lemma:constant-refined descent lemma} is practically computable and its computation imposes negligible additional computational burden.

Based on the above observations, we design $\RR$ with stopping criterion (denoted as $\RRsc$) in \Cref{alg:stopping criterion}. In this algorithm, we calculate the accumulation of the stochastic gradients used in the update and store it in $g_t$. After each epoch, $\RRsc$ checks
\begin{equation}\label{eq:stopping criterion}
\left\lVert g_t \right\rVert \leq \eta \varepsilon,
\end{equation}
where  $\varepsilon$ is the desired accuracy and $\eta>0$ is some constant  (e.g., $\eta =1$). Once this criterion is triggered, we stop the algorithm and return the last iterate $x_\tau$. In this section, we establish that the stopping criterion is guaranteed to be triggered with high probability and then provide complexity guarantees for the returned last iterate $x_\tau$.

\textbf{Horizon-free step size rule.}
Since a stopping rule-based method does not know its stopping horizon in advance, the step size used for deriving \Cref{theo:constant-high prob complexity} is no longer suitable for $\RRsc$. Instead, we design a horizon-free blockwise step size schedule for $\RRsc$. For each block index $k=0,1,2,\ldots$, let
\begin{equation}\label{eq:block length and epoch count}
S_k := 2^k,
\qquad
T_k := \sum_{j=0}^{k-1} S_j = 2^k-1,
\end{equation}
with the convention $T_0=0$. Thus, block $k$ has length $S_k$ and consists of the epochs
\[
t \in \{T_k,\ldots,T_k+S_k-1\}.
\]
We allocate blockwise failure probability budgets by
\[
\delta_k := \frac{6\delta}{\pi^2 (k+1)^2},
\qquad k=0,1,2,\ldots,
\]
so that $\sum_{k=0}^\infty \delta_k = \delta$. Within block $k$, we apply \Cref{lemma:stochastic error} at each epoch with failure probability $\delta_k/S_k$. By a union bound, the resulting bound on the stochastic error then holds simultaneously for all epochs in block $k$ with probability at least $1-\delta_k$. This gives rise to the logarithmic factor
\begin{equation}
\label{def:ellk} 
\ell_k := \log\!\Big(\frac{8n}{\delta_k/S_k}\Big)
=
\log\!\Big(\frac{8nS_k}{\delta_k}\Big), \quad k=0,1,2,\ldots.
\end{equation}
Accordingly, we choose a constant step size within each block:
\begin{equation}\label{eq:step-size-sc-dbl}
\alpha_t
=
\alpha_{\scc}^{(k)}
:=
\min\left\{
\frac{1}{4 n\mathsf{L}},
\frac{\eta\varepsilon}{8\sqrt{n \mathsf{AF}}\,\mathsf{L}\,
\ell_k}
\right\},
\qquad
t\in\{T_k,\ldots,T_k+S_k-1\}.
\end{equation}

With this horizon-free step size rule, the following lemma reveals the strict descent property of $\RRsc$ before triggering the stopping criterion.

\begin{lemma}[strict descent property of $\RRsc$ before stopping]
\label{lemma:descent before stop dbl}
Suppose that \Cref{Assumption:L smooth} holds and that the step sizes are chosen according to \eqref{eq:step-size-sc-dbl}. Then, there exists an event $\mathcal{E}$ with
$
\mathbb{P}[\mathcal{E}] \ge 1-\delta
$
such that, conditioned on $\mathcal{E}$, we have
\begin{equation}\label{eq:strict descent}
f(x_{t+1})-f(x_t)< - \frac{\alpha_t n}{4}\eta^2\varepsilon^2,
\qquad t=0,1,\ldots,\tau-1.
\end{equation}

\end{lemma}

\begin{proof}
For each epoch $t\ge 0$, let $k(t)$ be the unique block index such that
\[
t\in\{T_{k(t)},\ldots,T_{k(t)}+S_{k(t)}-1\}.
\]

\textbf{Step 1: Uniform stochastic error control over all epochs.}
For each block $k$ and each epoch $t\in\{T_k,\ldots,T_k+S_k-1\}$, we apply \Cref{lemma:stochastic error} with probability parameter $\delta_k/S_k$. Then, with probability at least $1-\delta_k/S_k$, we have
\begin{equation}\label{eq:sc-dbl-et}
\|e_t\|^2
\le 2 \alpha_t^2 n^2 \mathsf{L}^2\|\nabla f(x_t)\|^2
+ 32 \alpha_t^2 n\mathsf{L}^2 \bigl(\mathsf{A}(f(x_t)- \bar f ) + \mathsf{B}\bigr)\ell_{k(t)}^{2}.
\end{equation}
Taking a union bound over the $S_k$ epochs in block $k$, we obtain an event $\mathcal{E}_k$ with
$
\mathbb{P}[\mathcal{E}_k]\ge 1-\delta_k
$
such that \eqref{eq:sc-dbl-et} holds for all epochs in block $k$.
Finally, we define the good event
\begin{equation}\label{eq:good event}
\mathcal{E}:=\bigcap_{k\ge 0}\mathcal{E}_k.
\end{equation}
Since $\sum_{k=0}^\infty \delta_k=\delta$, another union bound gives
\[
\mathbb{P}(\mathcal{E})\ge 1-\delta.
\]
Hence, conditioned on $\mathcal{E}$, \eqref{eq:sc-dbl-et} holds for every epoch $t\ge 0$.

\textbf{Step 2: Strict descent before stopping, conditioned on $\mathcal{E}$.}
Our remaining arguments are conditioned on the event $\mathcal{E}$, and hence are deterministic. In the following, we first prove the non-increasing property of \(f(x_t)\), then refine this property into descent by a strictly positive value at each epoch. We prove by induction on $t$ that
\[
P(t):\qquad f(x_t)\le f(x_0), \qquad t=0,1,\ldots,\tau.
\]
The base case $P(0): f(x_0)\le f(x_0)$ is trivial. Now fix any $t\le \tau-1$ and assume $P(t)$ holds.
Then the argument below shows that
$
f(x_{t+1})-f(x_t)< -\frac{\alpha_t n}{4}\eta^2\varepsilon^2,
$
and therefore $f(x_{t+1})\le f(x_t)\le f(x_0)$, i.e., $P(t+1)$.
Hence $P(t)$ holds for all $t\le \tau$, and the strict descent inequality holds for all $t\le \tau-1$.

The derivation in \eqref{eq:descent property} provides
\begin{align*}
f(x_{t+1})
&\le f(x_t) - \frac{\alpha_t n}{4}\|\nabla f(x_t)\|^2
      - \frac{\alpha_t n}{2}\|g_t\|^2
      + \alpha_t n\|e_t\|^2 \\
&\le f(x_t) - \frac{\alpha_t n}{2}\|g_t\|^2
   + 32 \alpha_t^3 n^2 \mathsf{L}^2
     \bigl(\mathsf{A}(f(x_t)-\bar f)+\mathsf{B}\bigr)\ell_{k(t)}^2 \\
&\le f(x_t) - \frac{\alpha_t n}{2}\|g_t\|^2
   + 32 \alpha_t^3 n^2 \mathsf{L}^2
     \bigl(\mathsf{A}(f(x_0)-\bar f)+\mathsf{B}\bigr)\ell_{k(t)}^2,
\end{align*}
where the last step uses the induction hypothesis. 
By the definition of $\mathsf{F}$ in \Cref{lemma:constant-refined descent lemma}, we have
$
\mathsf{A}(f(x_0)-\bar f)+\mathsf{B}=\frac{\mathsf{A}\mathsf{F}}{3}.
$
Therefore,
$
32 \alpha_t^3 n^2 \mathsf{L}^2
\bigl(\mathsf{A}(f(x_0)-\bar f)+\mathsf{B}\bigr)\ell_{k(t)}^2
=
\frac{32}{3}\alpha_t^3 n^2 \mathsf{L}^2 \mathsf{A}\mathsf{F}\,\ell_{k(t)}^2.
$
Using the second term in \eqref{eq:step-size-sc-dbl}, we obtain
\[
\frac{32}{3}\alpha_t^3 n^2 \mathsf{L}^2 \mathsf{A}\mathsf{F}\,\ell_{k(t)}^2
\le \frac{\alpha_t n}{6}\eta^2\varepsilon^2.
\]
Hence,
\[
f(x_{t+1})-f(x_t)
\le - \frac{\alpha_t n}{2}\|g_t\|^2 + \frac{\alpha_t n}{6}\eta^2\varepsilon^2.
\]
Since $t\le \tau-1$, the stopping criterion has not been triggered yet, and thus
\[
\|g_t\|>\eta\varepsilon.
\]
It follows that
\[
f(x_{t+1})-f(x_t)
< - \frac{\alpha_t n}{4}\eta^2\varepsilon^2.
\]
It shows $f(x_{t+1})\le f(x_t)\le f(x_0)$, which closes the induction and proves the lemma. 
\end{proof}

We next show that the stopping rule of $\RRsc$ must be triggered after finitely many epochs on the good event from \Cref{lemma:descent before stop dbl}.

\begin{proposition}[finite stopping time of $\RRsc$]
\label{prop:stop RR dbl}
Under the setting of \Cref{lemma:descent before stop dbl} and let $\mathcal E$ be the event in \Cref{lemma:descent before stop dbl}. For any block index $K\ge 0$, if it satisfies inequality
\begin{equation}\label{eq:stop-condition-K}
\sum_{k=0}^{K} \frac{nS_k\alpha_{\scc}^{(k)}}{4}\eta^2\varepsilon^2
\geq
f(x_0)-\bar f,
\end{equation}
then, on $\mathcal E$, the stopping time $\tau$ of $\RRsc$ satisfies
\begin{equation} \label{eq:stopping time}
\tau \le T_{K+1}-1.
\end{equation}
Here, $T_{K+1} := \sum_{k=0}^K S_k$ is the starting epoch of block $K+1$, and hence $\tau \leq T_{K+1} - 1$ means that $\RRsc$ stops by the end of block $K$. 
\end{proposition}

\begin{proof}
Condition on the event $\mathcal E$. Suppose, for contradiction, that
\[
\tau \ge T_{K+1}.
\]
Then, \Cref{lemma:descent before stop dbl} gives
\[
f(x_{t+1})-f(x_t)< -\frac{\alpha_t n}{4}\eta^2\varepsilon^2,
\qquad t=0,1,\ldots,T_{K+1}-1.
\]
Summing the above inequalities over $t=0,\ldots,T_{K+1}-1$, we obtain
\begin{align*}
f(x_{T_{K+1}})-f(x_0)
&<
-\sum_{t=0}^{T_{K+1}-1}\frac{\alpha_t n}{4}\eta^2\varepsilon^2 \\
&=
-\sum_{k=0}^{K}\sum_{t=T_k}^{T_k+S_k-1}\frac{\alpha_t n}{4}\eta^2\varepsilon^2 \\
&=
-\sum_{k=0}^{K}\frac{nS_k\alpha_{\scc}^{(k)}}{4}\eta^2\varepsilon^2 \\
&\leq
-(f(x_0)-\bar f),
\end{align*}
where in the third line we used that $\alpha_t=\alpha_{\scc}^{(k)}$ is constant in block $k$.
Therefore,
\[
f(x_{T_{K+1}}) < \bar f,
\]
which contradicts the fact that $\bar f$ is a lower bound of $f$. Hence $\tau \le T_{K+1}-1$. 
\end{proof}

We next quantify the stopping complexity of $\RRsc$ under the blockwise step size rule. By \Cref{prop:stop RR dbl}, it suffices to find a block index $K$ for which the condition \eqref{eq:stop-condition-K} holds. 
 
\begin{theorem}[stopping complexity of $\RRsc$]
\label{thm:stop RR dbl}
Under the setting of \Cref{lemma:descent before stop dbl}, with probability at least $1-\delta$, the number of stochastic gradient evaluations needed to trigger the stopping criterion satisfies
\[
n\tau
=
\tilde{\mathcal O}\!\left(
\max\left\{
\sqrt n\,(\eta\varepsilon)^{-3},
\;
n(\eta\varepsilon)^{-2}
\right\}
\right).
\]
\end{theorem}

\begin{proof}
Let $\mathcal E$ be the event in \Cref{lemma:descent before stop dbl}. Since
$\mathbb P(\mathcal E)\ge 1-\delta$, it suffices to bound $\tau$ on $\mathcal E$.
For each block $k\ge 0$, recall that
$
\ell_k:=\log\!\left(\frac{8nS_k}{\delta_k}\right).
$
Since $S_k=2^k$ and $\delta_k=6\delta/(\pi^2(k+1)^2)$, the sequence
$\{\ell_k\}_{k\ge 0}$ is increasing. Hence the blockwise step sizes (see \eqref{eq:step-size-sc-dbl})
\[
\alpha_{\scc}^{(k)}
=
\min\left\{
\frac{1}{4n\mathsf L},
\frac{\eta\varepsilon}{8\sqrt{n\mathsf A\mathsf F}\,\mathsf L\,\ell_k}
\right\}
\]
are nonincreasing in $k$. Therefore, the left side of condition \eqref{eq:stop-condition-K} can be further lower bounded as 
\begin{equation}
\begin{aligned}
\sum_{k=0}^{K} \frac{nS_k\alpha_{\scc}^{(k)}}{4}\eta^2\varepsilon^2
\ge \frac{n\alpha_{\scc}^{(K)}}{4}\eta^2\varepsilon^2 \sum_{k=0}^{K} S_k  &=
\frac{nT_{K+1}\alpha_{\scc}^{(K)}}{4}\eta^2\varepsilon^2\\
&=
\min\left\{
\frac{T_{K+1}}{16\mathsf L}\eta^2\varepsilon^2,\,
\frac{T_{K+1}\sqrt n}{32\sqrt{\mathsf A\mathsf F}\,\mathsf L\,\ell_K}\eta^3\varepsilon^3
\right\}.
\end{aligned}
\end{equation}
Thus, any block index $K$ satisfying
\begin{equation}\label{eq:bound epochs for stopping}
T_{K+1}
\geq
\max\left\{
16\mathsf L(f(x_0)-\bar f)(\eta\varepsilon)^{-2},\,
32\sqrt{\mathsf A\mathsf F}\,\mathsf L(f(x_0)-\bar f)\,
n^{-1/2}\ell_K(\eta\varepsilon)^{-3}
\right\}
\end{equation}
also satisfies condition \eqref{eq:stop-condition-K}. Define
\[
a:=16\mathsf L(f(x_0)-\bar f)(\eta\varepsilon)^{-2},
\qquad
b:=32\sqrt{\mathsf A\mathsf F}\,\mathsf L(f(x_0)-\bar f)\,n^{-1/2}(\eta\varepsilon)^{-3}.
\]
Let
\[
\bar K:=\min\left\{K\ge 0:\ T_{K+1}\ge \max\{a,\;b\ell_K\}\right\}.
\]
Then \eqref{eq:bound epochs for stopping} holds with \(K=\bar K\). Hence, by \Cref{prop:stop RR dbl}, on \(\mathcal E\),
$
\tau \le T_{\bar K+1}-1
$. It remains to bound \(T_{\bar K+1}\). If \(\bar K=0\), then \(T_{\bar K+1}=1\), and the result is immediate. Henceforth assume \(\bar K\ge 1\). By the minimality of \(\bar K\) and the monotonicity of \(\{\ell_k\}_{k\ge 0}\),
\[
T_{\bar K}
<
\max\{a,\;b\ell_{\bar K-1}\}
\le
\max\{a,\;b\ell_{\bar K}\}.
\]
Therefore, by \eqref{eq:block length and epoch count}
\[
T_{\bar K+1}
=
2T_{\bar K}+1
\le
2\max\{a,\;b\ell_{\bar K}\}+1
\le
2a+2b\ell_{\bar K}+1.
\]
Since
$
S_{\bar K}=\frac{T_{\bar K+1}+1}{2}$, and 
$\bar K+1=\log_2(T_{\bar K+1}+1)
$ by \eqref{eq:block length and epoch count}, 
we have
\[
\ell_{\bar K} =
\log\left( \frac{8nS_{\bar K}\pi^2(\bar K+1)^2}{6\delta} \right)
\le
\log \left(
\frac{2\pi^2 n (T_{\bar K+1}+1)\log_2^2(T_{\bar K+1}+1)}{3\delta}
\right).
\]
Hence,
\[
T_{\bar K+1}
\le
2a
+
2b\log\left(
\frac{2\pi^2 n (T_{\bar K+1}+1)\log_2^2(T_{\bar K+1}+1)}{3\delta}
\right)
+1.
\]
A standard inversion of the inequality above yields
\[
T_{\bar K+1}
=
\tilde{\mathcal O}(a+b)
=
\tilde{\mathcal O}\!\left(
\max\left\{
\mathsf L(f(x_0)-\bar f)(\eta\varepsilon)^{-2},\;
\sqrt{\mathsf A\mathsf F}\,\mathsf L(f(x_0)-\bar f)\,n^{-1/2}(\eta\varepsilon)^{-3}
\right\}
\right),
\]
where $\tilde{\mathcal O}$ hides logarithmic factors in $n$, $1/\delta$, and $1/\epsilon$.
Therefore, we finally conclude that
\[
n\tau
=
\tilde{\mathcal O}\!\left(
\max\left\{
n(\eta\varepsilon)^{-2},\;
\sqrt n\,(\eta\varepsilon)^{-3}
\right\}
\right)
\]
on \(\mathcal E\), and hence with probability at least \(1-\delta\). 
\end{proof}

\subsection{The Last-Iterate Result}\label{lastIterate}
In this subsection, we show that when $\RRsc$ terminates, the returned last iterate satisfies a constant-factor stationarity guarantee, namely,
\[
   \|\nabla f(x_\tau)\|\leq \mathcal O(\eta\varepsilon).
\]
The following lemma establishes the fact that small $\|g_t\|$ implies small $\|\nabla f(x_t)\|$.

\begin{lemma}\label{lemma:bound grad by gt dbl}
Under the setting of \Cref{lemma:descent before stop dbl}, with probability at least $1-\delta$,
\[
\left\lVert \nabla f(x_t) \right\rVert ^2 \le \frac{8}{3} \left\lVert g_t \right\rVert^2 + \frac{4}{9} \eta^2 \varepsilon ^2, \quad \forall t \leq \tau.
\]
\end{lemma}

\begin{proof}
Fix any $t\le \tau$ and let $k(t)$ be its block index. Since $\nabla f(x_t)=g_t-e_t$ by \eqref{eq:gt and et}, we have
\[
\|\nabla f(x_t)\|^2 \le 2\|g_t\|^2 + 2\|e_t\|^2.
\]
On $\mathcal{E}$ (i.e., with probability at least $1-\delta$), using \eqref{eq:sc-dbl-et} to bound $\|e_t\|$ gives
\begin{align}\label{eq:sc-dbl-grad-gt-raw}
\|\nabla f(x_t)\|^2
&\le 2\|g_t\|^2 + 4 \alpha_t^2 n^2 \sL^2\| \nabla f(x_t) \|^2
+ 64 \alpha_t^2 n\sL^2 \left(\sA(f(x_t)- \bar f ) + \sB\right)\ell_{k(t)}^{2}.
\end{align}
Moreover, by \Cref{lemma:descent before stop dbl} we have $f(x_t)\le f(x_0)$ for all $t\le \tau$, hence
$
\sA(f(x_t)-\bar f)+\sB \le \sA(f(x_0)-\bar f)+\sB=\frac{\sA\sF}{3}
$, where $\sF$ is defined in \Cref{lemma:constant-refined descent lemma}.
Substituting this into \eqref{eq:sc-dbl-grad-gt-raw} yields
\[
\|\nabla f(x_t)\|^2
\le 2\|g_t\|^2 + 4 \alpha_t^2 n^2 \sL^2\| \nabla f(x_t) \|^2
+ \frac{64}{3} \alpha_t^2 n\sL^2 \sA\sF\,\ell_{k(t)}^{2}.
\]
Rearranging and using $\alpha_t\le \frac{1}{4n\sL}$ (so that $1-4\alpha_t^2n^2\sL^2\ge 3/4$) gives
\begin{align*}
\|\nabla f(x_t)\|^2
&\le \frac{4}{3}\left(2\|g_t\|^2 + \frac{64}{3}\alpha_t^2 n\sL^2 \sA\sF\,\ell_{k(t)}^2\right)
= \frac{8}{3}\|g_t\|^2 + \frac{256}{9}\alpha_t^2 n\sL^2 \sA\sF\,\ell_{k(t)}^2.
\end{align*}
Finally, by the second term in \eqref{eq:step-size-sc-dbl},
$\alpha_t\le \frac{\eta\varepsilon}{8\sqrt{n\sA\sF}\,\sL\,\ell_{k(t)}}$ and thus
$\alpha_t^2 n\sL^2\sA\sF\,\ell_{k(t)}^2 \le \eta^2\varepsilon^2/64$.
Therefore,
\[
\|\nabla f(x_t)\|^2 \le \frac{8}{3}\|g_t\|^2 + \frac{256}{9}\cdot\frac{\eta^2\varepsilon^2}{64}
\le \frac{8}{3}\|g_t\|^2 + \frac{4}{9}\eta^2\varepsilon^2,
\]
which holds for all $t\le \tau$. 
\end{proof}

When $\RRsc$ stops at iteration $\tau$, we have $\|g_\tau\| \leq \eta \varepsilon$. In addition, the above lemma indicates when $\|g_t\|$ is small, the true gradient $\|\nabla f(x_t)\|$ can also be made small once the step size is appropriately chosen. This observation motivates us to derive the property of the true gradient when the method terminates, yielding a last-iterate complexity result.

\begin{theorem}[last-iterate guarantee]\label{theo:last iterate dbl}
Under the setting of \Cref{lemma:descent before stop dbl}, with probability at least $1-\delta$, $\RRsc$ is guaranteed to terminate at some iteration $\tau$ satisfying $n\tau =\tilde{\mathcal O}\!\left(
\max\left\{n(\eta\varepsilon)^{-2},\,\sqrt n\,(\eta\varepsilon)^{-3}\right\}\right)$, and the returned last iterate $x_\tau$ has $\|\nabla f(x_\tau) \| \le \sqrt{\frac{28}{9}}\eta\varepsilon$. 
\end{theorem}

\begin{proof}
On the event $\mathcal E$ in \Cref{lemma:descent before stop dbl}, finite termination and complexity of $\RRsc$ follows directly from \Cref{prop:stop RR dbl} and \Cref{thm:stop RR dbl}.
At the stopping time $\tau$, the stopping criterion gives $\|g_\tau\|\le \eta\varepsilon$. Plugging this into \Cref{lemma:bound grad by gt dbl} yields
\[
\|\nabla f(x_\tau)\|^2
\le \frac{8}{3}\|g_\tau\|^2 + \frac{4}{9}\eta^2\varepsilon^2
\le \frac{8}{3}\eta^2\varepsilon^2 + \frac{4}{9}\eta^2\varepsilon^2
=\frac{28}{9}\eta^2\varepsilon^2. 
\] 
\end{proof}
Here is a remark on this last-iterate guarantee. Suppose that the stopping criterion is triggered at iteration $t$. Our $\RRsc$ returns $x_t$ rather than $x_{t+1}$ after running the $(t+1)$-th epoch. Indeed, we can also return $x_{t+1}$. By the Lipschitz continuity of the gradient function, we have
\[
   \|\nabla f(x_{t+1})\| \leq \|\nabla f(x_t)\| + \|\nabla f(x_{t+1}) - \nabla f(x_t)\| \leq \|\nabla f(x_t)\| + \alpha_t n \sL \|g_t\| \leq \frac{9}{4} \eta\varepsilon.
\]
Thus, one could also return $x_{t+1}$ as $x_\tau$ without sacrificing the last-iterate guarantee.

\textbf{False positive and false negative control.}
The stopping test $\|g_t\|\le \eta \varepsilon$ can be viewed as a computable surrogate for the intractable gradient test, but up to two different constant factors. Define
\[
\bar c := \sqrt{\tfrac{28}{9}}\,\eta,
\qquad
\munderbar{c} := \sqrt{\tfrac{8}{27}}\eta.
\]
All statements below are on the same high probability event $\mathcal E$ (with $\mathbb P[\mathcal E]\ge 1-\delta$) used in \Cref{lemma:descent before stop dbl,lemma:bound grad by gt dbl,theo:last iterate dbl}. By \Cref{theo:last iterate dbl}, at the stopping time $\tau$ we have
$
\|\nabla f(x_\tau)\|\le \bar c\varepsilon.
$
Thus, the stopping rule has no false positives relative to the threshold $\bar c\varepsilon$: Whenever the test $\|g_t\|\le \eta\varepsilon$ is triggered, the returned iterate is guaranteed to satisfy
$
\|\nabla f(x_\tau)\|\le \bar c\varepsilon.
$

Our stopping rule also controls false negatives in a quantitative sense. Following essentially the same argument as in \Cref{lemma:bound grad by gt dbl}, on $\mathcal E$ we have
\begin{equation}\label{eq:false negative control}
   \|g_t\|^2 \le \frac{9}{4}\|\nabla f(x_t)\|^2 + \frac{1}{3}\eta^2\varepsilon^2,
   \qquad t\le \tau.
\end{equation}
Therefore,
\[
\|\nabla f(x_t)\|\le \munderbar{c}\varepsilon
\quad\Longrightarrow\quad
\|g_t\|\le \eta\varepsilon,
\]
so the stopping test must trigger once the true gradient norm drops below $\munderbar{c}\varepsilon$.

Hence, the computable criterion $\|g_t\|\le \eta\varepsilon$ is sound at level $\bar c\varepsilon$ and sensitive at level $\munderbar{c}\varepsilon$. Since $\bar c>\munderbar{c}$, these two levels are necessarily different. Accordingly, the surrogate stopping test nicely approximates the ideal gradient test up to explicit constant factors.

\section{Numerical Experiments}\label{sec:experiments}
In this section, we conduct practical classification experiments on the widely recognized MNIST dataset.  Our model of choice is a two-hidden layer fully connected neural network, which utilizes the smooth $\tanh$ activation function  {used in \cite{lecun1998gradient,glorot2010understanding}} and logistic regression in the final layer for the classification task. Each hidden layer in our network comprises 50 units. The training algorithms implemented are $\RR$ and SGD. We ensure fairness in comparison by using the same parameter settings for both algorithms. Specifically, the initial point is obtained by running the default initializer of PyTorch, which generates the initial weight matrices with entries following an i.i.d. uniform distribution. We use a batch size of 8 and an initial learning rate of 0.05, which is subsequently step-decayed by a factor of 0.7 after each epoch. We note that the focus of this work is on theory, and hence the experiments are only illustrative and use a common step-decay step size schedule rather than our theoretical requirements. This step-decay procedure follows the convention in the training of neural networks.  We conduct 100 independent trials for each algorithm to ensure a comprehensive evaluation. Our code for reproducing the experiment results is available at \url{https://github.com/hengxuyu/high_probability_guarantees_for_random_reshuffling}.

Our theoretical results rely on the Lipschitz gradient condition in \Cref{Assumption:L smooth}. It is in general hard to fully verify this condition for our neural network training problem. In this section, we conduct experiments to partly verify them along the trajectory of $\RR$ for this specific training problem.

For verifying the Lipschitz gradient condition, we note that the actual Lipschitz gradient condition we used in \Cref{lemma:stochastic error} can be verified if the estimate
\[
    \widehat {\sf L} = \frac{\|\nabla f_{\pi_i}(x_t^i) - \nabla f_{\pi_i}(x_t)\|}{\|x_t^i - x_t\|}
\]
is reasonably upper bounded for epoch $t$ and inner iteration $1\leq i \leq n$. 

The experiment results for the first epoch (i.e., $t=0$) are displayed in \Cref{fig:lip_grad}. One can observe that $\widehat \sL$ is uniformly bounded by some reasonable constant. Interestingly, one can observe that the estimates quickly decrease to small values when the inner iteration $i$ increases. We suspect that the algorithm quickly enters a benign local landscape with benign Lipschitz gradient. Further investigation of the benign local landscape behavior is left as future work. This experimental result partly justifies the Lipschitz gradient condition for this neural network training problem.

\begin{figure}[t]
    \centering
    \includegraphics[width=0.4\linewidth]{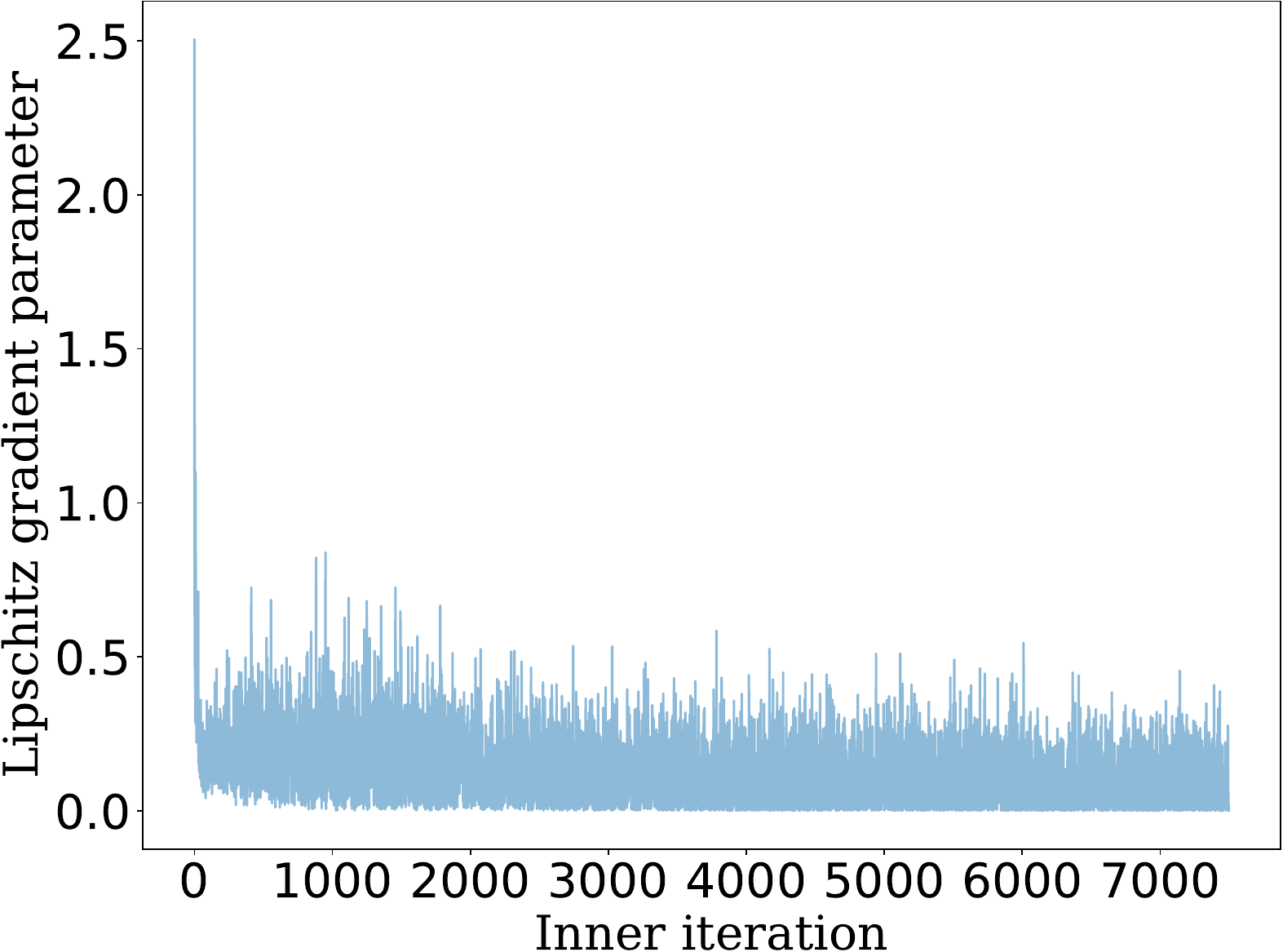}
    \caption{Verification of the Lipschitz gradient condition used in our theoretical developments.}\label{fig:lip_grad}
\end{figure}

\begin{figure}[t]
    \centering
    \begin{subfigure}[t]{0.48\linewidth}
    \includegraphics[width=0.8\linewidth]{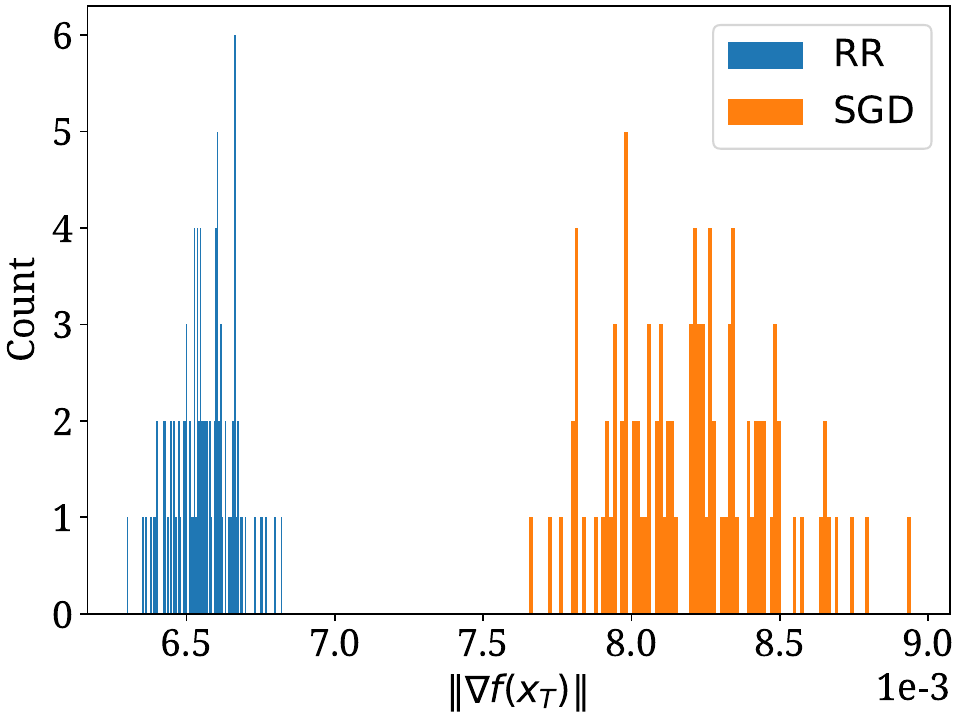}
	\caption{Gradient norm statistics for the last iterate over 100 independent runs.}
	\label{fig:lft}
    \end{subfigure}\hfill
    \begin{subfigure}[t]{0.48\linewidth}
    \includegraphics[width=0.85\linewidth]{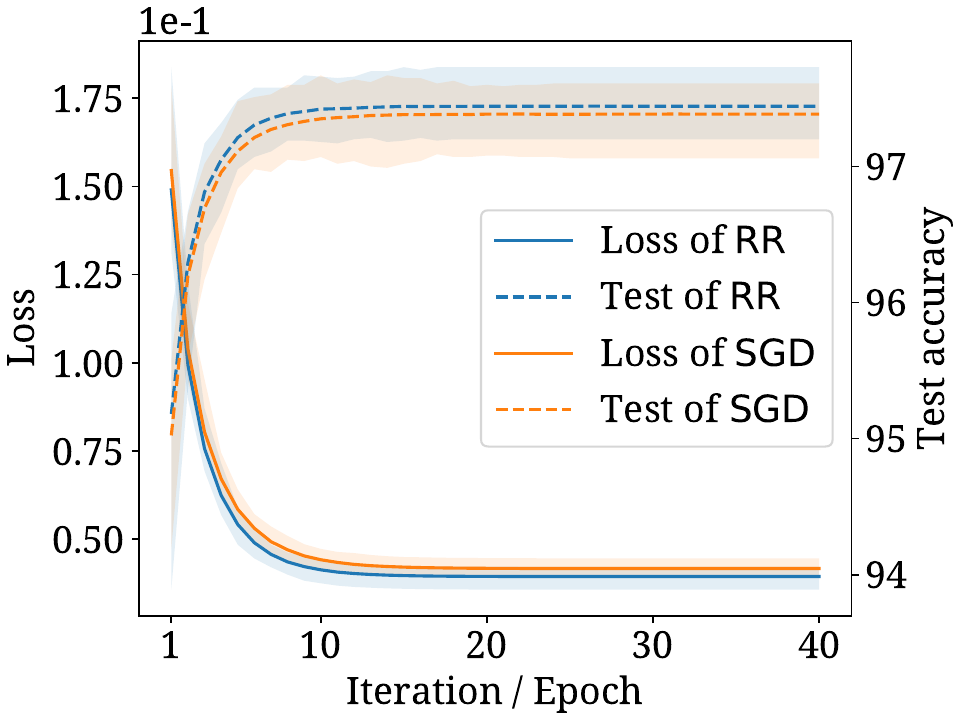}
	\caption{Training loss and test accuracy of $\RR$ and SGD.}
	\label{fig:losstest}
    \end{subfigure}
    \caption{Comparison of performance between $\RR$ and SGD.}
\end{figure}

\begin{figure}[t]
    \centering
    \begin{subfigure}{0.48\linewidth}
    \includegraphics[width=0.79\linewidth]{./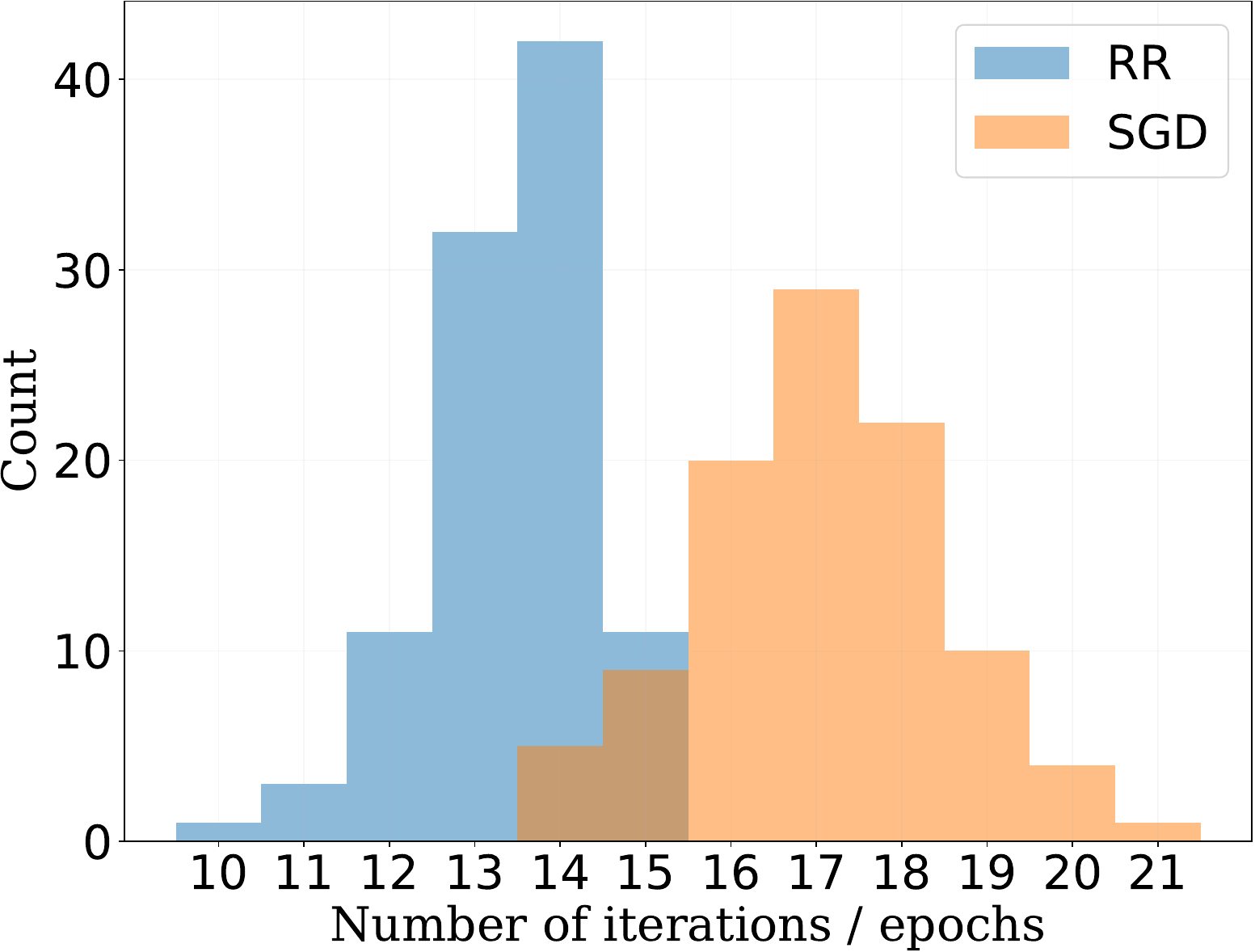}
	\caption{ {Histogram of iterations / epochs $t$ for achieving $\|\nabla f(x_t)\|\leq 10^{-2}$ over 100 independent runs.}}
	\label{fig:hist_1e-2}
    \end{subfigure}\hfill
    \begin{subfigure}{0.48\linewidth}
    \includegraphics[width=0.79\linewidth]{./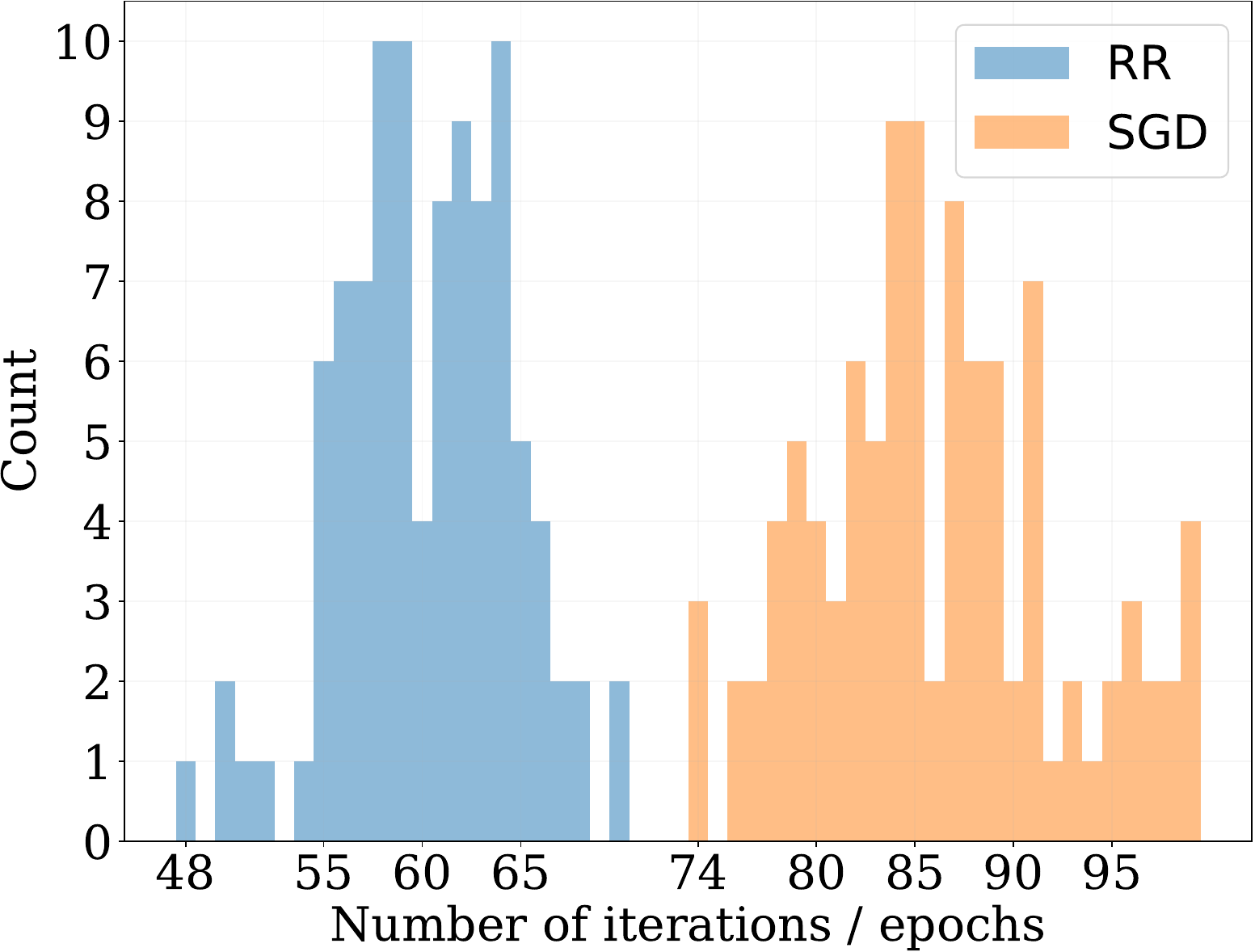}
	\caption{ {Histogram of iterations / epochs $t$ for achieving $\|\nabla f(x_t)\|\leq 10^{-3}$ over 100 independent runs.}}
	\label{fig:hist_1e-3}
    \end{subfigure}
    \caption{ {Statistics of epochs $t$ of $\RR$ and SGD for achieving an $\varepsilon$-stationary point (i.e., $\|\nabla f(x_t)\|\leq \varepsilon$) with varying $\varepsilon$.\label{fig:hist_statistics}}}
\end{figure}

Next, we display the gradient norm statistics for the last iterate in both algorithms in \Cref{fig:lft}. It can observed that $\RR$ not only tends to yield a smaller gradient norm of the last iterate, but also exhibits a superior concentration property. This empirical observation aligns with our theoretical findings that the gradient norm in $\RR$ converges with high probability.  In \Cref{fig:losstest}, we show the training loss and test accuracy of $\RR$ and SGD. We can conclude that $\RR$ provides a slightly smaller training loss and demonstrates a slightly superior test accuracy.

\begin{figure}[t]
    \centering
    \includegraphics[width=0.6\linewidth]{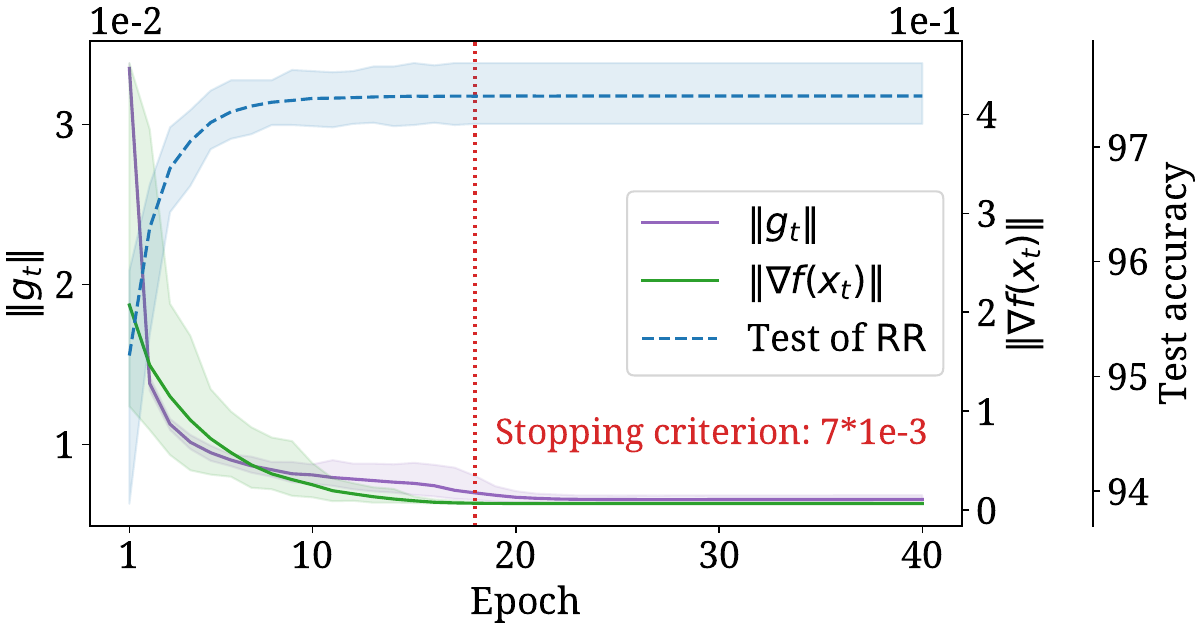}
    \caption{Evolution of $\|g_t\|$, $\|\nabla f(x_t)\|$, and test accuracy of $\RR$.}
    \label{fig:gttest}
\end{figure}

 {Our theory, together with the existing complexity comparison between $\RR$ and SGD, indicates that $\RR$ can take less steps in the sense of high probability for converging to an $\varepsilon$-stationary point compared to SGD. To support this result, we conduct experiments to display the histograms of epochs $t$ of $\RR$ and SGD for achieving $\|\nabla f(x_t)\|\leq \varepsilon$ with $\varepsilon = 10^{-2}$ and $\varepsilon = 10^{-3}$. The results are depicted in \Cref{fig:hist_statistics}. Note that we change the step decay factor $0.7$ to $0.95$ for the setting where $\varepsilon = 10^{-3}$, as otherwise the step size will be decayed to nearly $0$ too early. It can be observed from \Cref{fig:hist_statistics} that $\RR$ often takes less steps to find a target solution than SGD. Additionally, $\RR$'s superiority tends to be clearer when $\varepsilon$ is smaller. These experiment observations align with the theory.}

In addition, we conduct experiments to study the stopping criterion $\|g_t\|\leq \eta \varepsilon$ defined in \eqref{eq:stopping criterion}. The result is displayed in \Cref{fig:gttest}. We observed that $\|g_t\|$ finally aligns with $\|\nabla f(x_t)\|$ after $20$ epochs, aligning with our stopping criterion theorems.  It is also demonstrated that $\|g_t\|$ decreases along with the epoch index $t$. Upon setting the stopping criterion in \eqref{eq:stopping criterion} to $\|g_t\|\leq 7\times 10^{-3}$, the training process completes around the $17$th epoch, yielding a converged test accuracy. This suggests that $\|g_t\|$ is a practical measure that can be used as a stopping criterion.

\section{Conclusion and Discussions}
In this work, we established high probability complexity guarantees for $\RR$. In particular, we derived a high probability ergodic sample complexity guarantee for finding a stationary point, without using additional assumptions beyond those in the traditional in-expectation analysis of $\RR$. Furthermore, we proposed a stopping criterion for $\RR$ (denoted as $\RRsc$). With a designed horizon-free step size rule, such a stopping criterion terminates the method after a finite number of iterations and returns a last iterate with the same order of complexity guarantee.
The key to establishing these high probability results is a new concentration property we derived for random reshuffling, which could be of independent interest. 

The dependence on $n$ in \eqref{eq:RR sampling gradients} is caused by bounding the random variable in \eqref{eq:variance bound rv} using variance. While it does not affect our complexity, improving $n$ to $i$ (if possible) could be insightful. Additionally, with the derived concentration property for random reshuffling, it would be interesting to establish a tight complexity bound for $\RR$ to avoid saddle points, which is expected to be the same order as the first-order complexity derived in this work. 
We leave these areas for future exploration.

\bibliographystyle{plain}
\bibliography{references}

\end{document}